\documentclass[11pt,a4paper]{amsart}
\usepackage{amssymb,amsmath,mathrsfs}
\usepackage{times}

\usepackage[left=3.1 cm,right=3.1 cm,top=3.9cm,bottom=3.5cm]{geometry}

\usepackage{color}

\usepackage{bbm}
\usepackage{enumitem}

\newtheorem{theorem}{Theorem}[section]
\newtheorem{lemma}[theorem]{Lemma}
\newtheorem{proposition}[theorem]{Proposition}

\newtheorem{definition}[theorem]{Definition}

\newtheorem{remark}[theorem]{Remark}

\numberwithin{equation}{section}

\newcommand{\R}{\mathbb{R}}

\newcommand{\dd}{\mathrm{d}}

\newcommand{\CC}{\operatorname{C}}

\newcommand{\LL}{\operatorname{L}}

\newcommand{\WW}{\operatorname{W}}

\newcommand{\mylabel}[2]{#2\def\@currentlabel{#2}\label{#1}}

\def\Xint#1{\mathchoice
   {\XXint\displaystyle\textstyle{#1}}%
   {\XXint\textstyle\scriptstyle{#1}}%
   {\XXint\scriptstyle\scriptscriptstyle{#1}}%
   {\XXint\scriptscriptstyle\scriptscriptstyle{#1}}%
   \!\int}
\def\XXint#1#2#3{{\setbox0=\hbox{$#1{#2#3}{\int}$}
     \vcenter{\hbox{$#2#3$}}\kern-.5\wd0}}

\def\dashint{\Xint-}

\DeclareMathAlphabet{\mathpzc}{OT1}{pzc}{m}{it}

\begin{document}
\title{Partial regularity for local minimizers of variational integrals with lower order terms}

\author[J.~Campos Cordero]{Judith Campos Cordero}

\maketitle
\hrulefill
\begin{abstract}
We consider functionals of the form 
\begin{equation*}
\mathcal{F}(u):=\int_\Omega\!F(x,u,\nabla u)\,\dd x,
\end{equation*}
where $\Omega\subseteq\R^n$ is open and bounded. The integrand $F\colon\Omega\times\R^N\times\R^{N\times n}\to\R$ is assumed to satisfy the classical assumptions of a power $p$-growth and the corresponding strong quasiconvexity. In addition, $F$ is  H\"older continuous with exponent $2\beta\in(0,1)$ in its first two variables uniformly with respect to the third variable, and bounded below by a quasiconvex function depending only on $z\in\R^{N\times n}$. We establish that strong local minimizers of $\mathcal{F}$ are of class $\CC^{1,\beta}$ in an open subset $\Omega_0\subseteq\Omega$ with $\mathcal{L}^n(\Omega\setminus\Omega_0)=0$. This partial regularity also holds for a certain class of weak local minimizers at which the second variation is strongly positive and satisfying a $\mathrm{BMO}$-smallness condition. This extends the partial regularity result for local minimizers by Kristensen and Taheri (2003) to the case where the integrand depends also on $u$. Furthermore, we provide a direct strategy for this result, in contrast to the blow-up argument used for the case of homogeneous integrands. 

\medskip
\noindent 
\textbf{Key words:} \textit{Strong local minimizers, regularity, quasiconvexity}

\noindent\textsc{MSC (2010):} 35B65, 35J50, 35J60, 49N99
\vspace{0.1cm}

\noindent\textsc{Date:} \today{}
\end{abstract}

\section{Introduction}
We investigate the regularity properties of $\WW^{1,q}$-local minimizers of variational integrals of the form
\begin{equation}\label{eq:functional}
\mathcal{F}(u):=\int_\Omega\!F(x,u,\nabla u)\,\dd x.
\end{equation}
Here, $\Omega\subseteq\R^n$ is an open and bounded set and $u\in\WW^{1,p}(\Omega,\R^N)$ for a fixed $p\geq 2$. We recall at this point that, for a given $q\in[1,\infty]$,  a map $u\in\WW^{1,p}(\Omega,\R^N)$ is said to be a $\WW^{1,q}$-\textbf{local minimizer} if there exists a $\delta>0$ such that, for every $\varphi\in\WW^{1,q}_0(\Omega,\R^N)$ satisfying $\|\nabla\varphi\|_{\LL^q(\Omega,\R^{N\times n})}<\delta$, it holds that $\mathcal{F}(u)\leq\mathcal{F}(u+\varphi)$. When $q\in[1,\infty)$, we say that $u$ is a strong local minimizer, whereas if $q=\infty$, we call $u$ a weak local minimizer.

Regarding the integrand $F\colon\Omega\times\R^N\times\R^{N\times n}\to\R$, we assume that it satisfies the following standard conditions:
\begin{enumerate}[start=0,label={(H\arabic*)}]
\item\label{H0} $F=F(x,u,z)$ is a continuous function, of class $\CC^2$ in the variable $z\in\R^{N\times n}$, and such that $F_{zz}$ is continuous in $\Omega\times\R^N\times\R^{N\times n}$.
\item \label{H1:pg} $F$ satisfies a natural growth condition: for a fixed $p\geq 2$ and for a constant $L>0$ it holds that, for every $(x,u,z)\in \Omega\times\R^N\times\R^{N\times n}$,
\begin{equation*}
|F(x,u,z)|\leq L(1+|z|^p).
\end{equation*}
\item \label{H2:QC} $F$ is strongly quasiconvex: there is a constant $\ell>0$ such that, for every $(x_0,u_0,z_0)\in \Omega\times\R^N\times\R^{N\times n}$ and every $\varphi\in\WW^{1,p}_0(\Omega,\R^N)$, 
\begin{equation*}
\ell\int_\Omega\! |V(\nabla\varphi)|^2\,\dd x\leq \int_\Omega\! \left( F(x_0,u_0,z_0+\nabla\varphi)-F(x_0,u_0,z_0) \right) \,\dd x,
\end{equation*}
where the auxiliary function $V\colon\R^{N\times n}\to\R$ is defined by
\[
V(z):=(|z|^2+|z|^p)^{\frac{1}{2}}. 
\]
\item\label{H3:GleqF} There exists a function $G\colon \R^{N\times n}\to\R$ such that, for every $(x,u,z)\in\Omega\times\R^N\times\R^{N\times n}$, 
\begin{equation*}
G(z)\leq F(x,u,z).
\end{equation*}
Furthermore, $G$ is assumed to be strongly quasiconvex, in the sense that $G$ is continuous and, for a constant $\ell_G>0$, it holds that for every $z_0\in\R^{N\times n}$ and every $\varphi\in\WW^{1,p}_0(\Omega,\R^N)$,
\[
\ell_G\int_{\Omega}\!|\nabla\varphi|^p\,\dd x\leq \int_{\Omega} \left(G(z_0+\nabla\varphi)-G(z_0) \right)\,\dd x.
\]
\item\label{H4:HoldCont} There is an increasing function $\rho(s)\geq 2L$ such that, for a constant $c>0$ and for every $(x,u,z),(y,v,z)\in\Omega\times\R^N\times\R^{N\times n}$,
\begin{equation*}
|F(x,u,z)-F(y,v,z)|\leq c\, \vartheta(|v|,|x-y|^2+|u-v|^2)(1+|z|^p),
\end{equation*}
with $\vartheta(s,t)=\min\{2L,\rho(s)t^{\beta}\}$, $\beta\in(0,\frac{1}{2})$.
\end{enumerate}

On the other hand,  when dealing with weak local minimizers, we will replace assumption \ref{H0} by the following regularity condition on $F$:
\begin{enumerate}[start=0,label={(H\arabic*$_\mathrm{w}$)}]
\item\label{H0b:C2}  $F=F(x,u,z)$ is a continuous function, of class $\CC^2$ in the variables $(u,z)\in\R^N\times \R^{N\times n}$, and such that $F_{uu}$ and $F_{zz}$ are continuous in $\Omega\times\R^N\times\R^{N\times n}$.
\end{enumerate}
For the case of weak local minimizers we will also require the following growth condition on $F_u$:
\begin{enumerate}[start=0,label={(H1$_\mathrm{w}$)}]
\item\label{H1b:Fu}  
For a constant $K>0$ it holds that, for every $(x,u,z)\in \Omega\times\R^N\times\R^{N\times n}$,
\begin{equation*}
|F_u(x,u,z)|\leq K(1+|z|^p).
\end{equation*}
\end{enumerate}

\begin{remark}
Assumption \ref{H3:GleqF} is a coercivity condition on $F$ that is uniform in $(x,u)\in\Omega\times\R^N$, see \cite{CYCJK}. In this regard, we emphasize that the value of the constants $\ell$ and $\ell_G$ does not play an important role in our context and, furthermore, we can normalize the integrand $F$  by $\min\{\ell,\ell_G\}$ while the set of local minimizers remains unchanged. Whereby, for the rest of the paper we shall assume, without loss of generality, that $\ell=\ell_G=1$. 
\end{remark}

We note that \ref{H3:GleqF}  plays a fundamental role in obtaining a preliminary higher integrability on the local minimizers under investigation. On the other hand, assumption \ref{H4:HoldCont} is a H\"older continuity condition on the $(x,u)$ variables and uniform in $z$. 

We also record that assumptions \ref{H3:GleqF} and \ref{H4:HoldCont} enable us to establish suitable estimates for the case under study, in which $F$ depends on the lower order terms $x\in\Omega$ and $u\in\R^{N}$, in addition to its dependence on $z\in\R^{N\times n}$. A corresponding assumption of the form of \ref{H4:HoldCont} is also necessary when $F$ depends only on $x$ and $z$, see \cite{JCC2017}, while condition \ref{H3:GleqF} is only required when the dependence on $u$ is present. However, we recall that all the hypotheses \ref{H0}-\ref{H4:HoldCont} are standard when proving partial regularity of global minimizers if the integrand depends also on $(x,u)$, see, for example, \cite{AF87, Beck2011, GiaqMod86, Giusti,Hamburger96, KM07}.

The study of local minimizers in the calculus of variations has been broadly motivated by models arising in materials science. In the same spirit, a fundamental example of John in \cite{John} establishes non-uniqueness of equilibrium points in nonlinear hyperelasticity. The annular shape of the domain in this example and the symmetric properties of the possible solutions to the equilibrium equations provided strong evidence that the topology of $\Omega$  plays a central role in the number of local minimizers admitted by the functional. 

The question of existence of local minimizers was addressed by Post and Sivaloganathan for low dimensions in \cite{PS1997}, see also \cite{Tah2001a}. Kristensen and Taheri further extended the previous examples to obtain $\WW^{1,p}$-local minimizers for homogeneous integrands satisfying a natural $p$-growth condition. Furthermore, it was shown by Taheri in \cite{Tah03} that, if the domain is star-shaped, then \textit{strong} local minimizers are actually global minimizers if they are subject to linear boundary conditions. The problem of existence was finally settled in great generality by Taheri in \cite{Tah05}, where a lower bound for the number of local minimizers of functionals of the form \eqref{eq:functional} is given in terms of certain topological properties of the domain. 

The foundations of regularity theory for solutions to elliptic systems rely on the astounding works of De Giorgi \cite{DG61} and Almgren \cite{Al68,Al76}, that were followed by those of Giusti and Miranda \cite{GiMi}, Morrey \cite{Morrey}, and others. Later on, Giaquinta and Giusti \cite{GiaqGius} established partial regularity for minimizers of functionals as in \eqref{eq:functional} under the assumption of quadratic growth and strong convexity in the $z$ variable. Many other works followed in the case of convexity in the $z$-variable, see \cite{Gia, Giusti}. 

On the other hand, in the quasiconvex setting it wasn't until the work of Evans \cite{Eva86} that partial regularity of class $\CC^{1,\alpha}$ was established for minimizers, under the assumption that the integrand satisfies a controlled quadratic growth. Several important generalizations of his work have been established, including  \cite{AF87,AF89,CFP18,CFM98,CFLM20,DLSV12,DM04,EG87,FH85,FH91,Gia1988,GK19,Hamburger96,CH16,
KM07,TSchmidt}.   Of particular relevance to us is the direct approach to establish partial regularity of minimizers of functionals of the form \eqref{eq:functional}, with $F$ satisfying a $p$-growth as in \ref{H1:pg} and a strong quasiconvexity condition of the form \ref{H2:QC}. The foundations of this method go back to the early works of \cite{Gia}-\cite {GiaqMod86}. 

In the quasiconvex setting, a full regularity result under a smallness condition on the boundary datum has been obtained in \cite{JCCJKreg}. Furthermore, the impact of regularity on uniqueness has been studied by the author and Jan Kristensen in \cite{JCCJKuniq}, where  a uniqueness result for global minimizers of quasiconvex functionals has been obtained, once again under a natural smallness restriction on the Dirichlet boundary condition.  

Regarding the case of local minimizers, Kristensen and Taheri established in \cite{KT} that, under the classical assumptions of natural growth and strong quasiconvexity of a homogeneous integrand $F=F(z)$, $\WW^{1,q}$-local minimizers are of class $\CC^{1,\alpha}$ in a subset of the domain $\Omega$ of full $n$-dimensional measure. They observed that an important challenge when dealing with $\WW^{1,q}$-local minimizers, if $q>p$,  is that of obtaining higher integrability properties. An important reason leading to this obstacle is that, while establishing partial regularity via an indirect blow-up argument, the Caccioppoli inequality that can be obtained carries on the right hand side a term of the form \begin{equation}\label{eq:thetaCacc}
\theta\int_{B(x_0,r)}\! |\nabla u_j-\nabla a_j|^p\,\dd x,
\end{equation} where $\theta\in(0,1)$, $(u_j)$ is the blown-up sequence and $(a_j)$ is a sequence of suitable affine maps. In the case of global minimizers, this term can be made to disappear after an iteration process as in \cite{Eva86}. However, in the case of local minimizers, such an iteration would require letting $j\to\infty$, which cannot be done at this stage since the sequence $(u_j)$ is only known to converge weakly in $\WW^{1,p}$. On the other hand, Evans and Gariepy had already observed in \cite{EG87} that a Caccioppoli inequality of the first kind (in the sense of \cite[Ch. 9]{Giusti}) is not really necessary to establish convergence in $\WW^{1,2}$ of the blown-up sequence. However, their argument does not resolve the issue for strong local minimizers. In \cite{KT}, the obstacle of not being able to iterate the Caccioppoli inequality carrying the term in \eqref{eq:thetaCacc} is overcome for $\WW^{1,q}$-local minimizers by means of a measure-theoretical argument, thanks to which strong convergence can be established bypassing the need of the usual estimates that give higher integrability. Other interesting regularity results concerning local minimizers can be found in \cite{Beck2011,BG17,CN03,CFLM20,DLSV12,ScSc}.

In \cite{JCC2017} it was shown that the blow-up strategy can be suitably adapted to obtain partial regularity of minimizers when the integrand is of the form $F(x,z)$, with $(x,z)\in\Omega\times\R^{N\times n}$ and the treatment can even be taken to establish partial regularity up to the boundary of $\Omega$. However, the case in which the integrand $F$ depends also on $u\in\R^N$ poses extra technical challenges. Such difficulties arise mainly from the fact that a uniform coercivity assumption of the form \ref{H3:GleqF} cannot suitably be \textit{blown up} and, just as in the case of global minimizers for functionals with lower order terms, see \cite{GiaqMod86}, the strong quasiconvexity condition \ref{H2:QC} also fails to be enough to establish a pre-Caccioppoli inequality in this case. 

On the other hand, and in remarkable contrast with the aforementioned regularity results,   M\"uller and {\v{S}}ver{\'a}k \cite{MS03} built examples of Lipschitz solutions to the weak Euler-Lagrange equations associated to a quasiconvex variational problem, but that are nowhere of class $\CC^1$.  Kristensen and Taheri \cite{KT} provided a modification of these examples so that the second variation at the Lipschitz solutions to the Euler-Lagrange equations was actually uniformly positive, and hence the solutions were, in fact,  weak local minimizers. Furthermore, the importance of the regularity properties of strong local minimizers has also been made evident in the sufficiency result established by Grabovsky and Mengesha in \cite{GM}, see also \cite{JCCKK}. 

In this setting, the main objective of this work is to establish a partial $\CC^{1,\alpha}$-regularity result for $\WW^{1,q}$-local minimizers of variational functionals of the form \eqref{eq:functional}, where the integrand also depends on $u$.  It is worth noting that, just as in the case of homogenous integrands first established in \cite{KT}, and then extended to integrands with $x$ dependence in \cite{JCC2017}, we require the local minimizer to be a priori a $\WW^{1,q}_{\mathrm{loc}}$ map. It remains unclear whether this assumption is really necessary. 

As usual, our regularity result will rely on a suitable decay of the mean oscillations denoted by
\[
E(y,r):=\dashint_{B(y,r)} \!|V(\nabla u - (\nabla u)_{y,r})|^2\,\dd x,
\] 
were $B(y,r)\subseteq\Omega$ is a ball and the bar on the integral signifies its mean value, as specified in the list of notations below. 

Our main theorem is the following. 

\begin{theorem}\label{theo:regularity}
Let $F\colon\Omega\times\R^N\times \R^{N\times n}\to\R$ be an integrand of class $\CC^2$ in the variable in $\R^{N\times n}$ and satisfying {\ref{H0}-\ref{H4:HoldCont}}. Let $q\in [2,\infty]$ and assume that $u\in\WW^{1,p}(\Omega,\R^N)\cap \WW^{1,q}_\mathrm{loc}(\Omega,\R^N)$ is a $\WW^{1,q}$-local minimizer of $\mathcal{F}$. 
Let 
\[
\Sigma_0:=\left\{  x\in\Omega \,\,\mathrm{ : }\,\, \limsup_{r\to 0}|(u)_{x,r}|+|(\nabla u)_{x,r}|=\infty\,\,\,\mathrm{ or }\,\,\liminf_{r\to 0} E(x,r)>0   \right\}
\]
and define $\Omega_0:=\Omega\setminus\Sigma_0$. Then, $\Omega_0$ is open, $\mathcal{L}^n(\Sigma_0)=0$ and, if $q\in[2,\infty)$, then $u$ is locally of class $\CC^{1,\beta}$ in $\Omega_0$. On the other hand, if $q=\infty$, we assume in addition that the second variation at $u$ is strongly positive, as in \eqref{H5:SecVar}, and that assumptions {\ref{H0b:C2}} and {\ref{H1b:Fu}} also hold. Then, there exists a $\delta_0>0$ such that, if $\delta_1\in(0,\delta_0)$ and if
\begin{equation}\label{eq:q=infty}
\limsup_{r\to 0} \|\nabla u (y) - (\nabla u)_{x,r} \|_{\LL^\infty(B(x,r),\R^{N\times n})} < \delta_1 
\end{equation}
locally uniformly in $x\in\Omega$, then $u$ is also locally of class $\CC^{1,\beta}$ in $\Omega_0$.
\end{theorem}
Note that, by the examples cited above, condition \eqref{eq:q=infty} is indeed necessary for the case of $\WW^{1,\infty}$-local minimizers. 

The strategy that we follow is fundamentally inspired by the direct argument of Giaquinta and Modica \cite{GiaqMod86} to establish partial regularity of global minimizers. The application of this approach is new in the context of $\WW^{1,q}$-local minimizers. An important challenge to overcome is that, just as for the case without $u$-dependence, we can only obtain a Caccioppoli of the first kind, which is stated in Theorem \ref{theo:Caccioppoli}. 

As in the homogeneous case, the restrictions imposed by the fact that we are dealing with local minimizers do not allow us to perform an iteration to force the term  ${\theta}\, \dashint_{B_r }  |V( \nabla u - z_0) |^2  \, \dd  x $ to vanish. However, a generalized version of Gehring's Lemma, originally established in \cite{Geh73}, was obtained by Stredulinsky in \cite{Stredulinsky}. This generalized version enables the Caccioppoli inequality  to self-improve into a similar one with higher exponents on all the relevant terms. Whereby, by combining this with a preliminary higher integrability obtained in a classical way, we can finally obtain a suitable reverse H\"older inequality, which is stated in Theorem \ref{theo:ReverseHolder}. 

The reverse H\"older inequality will finally allow us to establish a good decay rate for the mean oscillations of $\nabla u$. This is achieved by comparing the mean oscillations of the minimizer $u$ with those of the (regular) minimizer of a second order Taylor polynomial of a frozen integrand of the form $F(x_0,u_0,z)$, as we show in Theorem \ref{theo:linearization}.

The main body of the paper is organized as follows. In Section \ref{Sec:preliminaries}  we recall classical estimates for the integrands satisfying our main assumptions. Furthermore, we state Stredulinsky's version of Gehring's Lemma, being one of the main ingredients of our regularity proof. The final part of Section \ref{Sec:preliminaries} includes the statements of the decay rate and the $\LL^p$-estimates satisfied by $\mathbb{A}$-harmonic maps, namely, the minimizers of strongly quasiconvex quadratic functionals. The main content of this work is contained in Sections \ref{Sec:WeakLocMin} and \ref{section:regularity}. In Section \ref{Sec:WeakLocMin} we prove that weak local minimizers at which the second variation is strongly positive are, in fact, $\WW^{1,\mathrm{BMO}}$-local minimizers, in the sense that $u$ minimizes the energy under perturbations for which the $\mathrm{BMO}$-seminorm of the derivative is small.  Finally, in Section \ref{section:regularity} we establish the proof  of Theorem \ref{theo:regularity}. Concerning the different steps of the proof, we remark that the linearization process in Subsection \ref{Sec:Linearization} makes use of the aforementioned $\WW^{1,\mathrm{BMO}}$-minimality result  for the case of weak local minimizers, since it is by these means that we can use the local minimality property when comparing the energy at $u$ with that of the solution to the linearized problem. 

We now introduce the notation that we use in the rest of the paper. 
\subsection{Notation:} Throughout this work, we shall use the following set of notational conventions:
\begin{itemize}
\item $|\cdot|$ denotes the Euclidean norm in any space $\R^m$. If $m=N\times n$, so that we are in a space of matrices, it denotes the trace norm, so that $|z|:=(\mathrm{Tr}(zz^t))^{\frac{1}{2}}$; 
\item $B(x_0,r)$ denotes the open ball centred at $x_0\in \R^n$ and with radius $r>0$;
\item $\overline{B(x_0,r)}$ denotes the closure in $\R^n$ of $B(x_0,r)$; 
\item if we have fixed a ball $B(y,r)$, we shall often write $B_r$ instead of $B(y,r)$;
\item if $\zeta>0$ and we have fixed a ball $B(y,r)$, we shall denote by $B_{\zeta r}$ the ball still centred at $y$ and with radius $\zeta r$. 
\item for a given open set $\omega\subseteq\R^n$ of finite Lebesgue measure $\mathcal{L}^n(\omega)\in(0,\infty)$, and for a given map $f\colon\omega\to\R^m$, we denote the average value of $f$ over $\omega$ by
\[
(f)_\omega=\frac{1}{\mathcal{L}^n(\omega)}\int_\omega \! f\,\dd x=\dashint _\omega \! f\,\dd x.
\]
In the particular case that $\omega=B(x_0,r)$, we denote $(f)_{x_0,r}:=(f)_\omega$;
\item for a function $f\in\CC^{1,\gamma}(\overline{\omega},\R^m)$, with $\gamma\in(0,1)$, $\|f\|_{\CC^1}:=\|f\|_{\CC^1(\overline{\omega},\R^m)} + [\nabla f]_{\CC^{0,\alpha}} $, where $ [\nabla f]_{\CC^{0,\alpha}} $ denotes de $\alpha$-H\"older semin-norm of $\nabla f$;
\item for a function $f\in\WW^{1,s}(\Omega,\R^N)$, we denote $\|f\|_{1,s}:=\|f \|_{\LL^s(\Omega,\R^N)}+\|\nabla f \|_{\LL^s(\Omega,\R^{N\times n})}$;
\item for $1\leq p<\infty$, $\WW^{1,p}_0(\Omega,\R^N)$ refers to the closure in $\WW^{1,p}$ of the space of compactly supported smooth functions $\CC^{\infty}_c(\Omega,\R^N)$.
\\ On the other hand, $\WW^{1,\infty}_0(\Omega,\R^N):= \WW^{1,\infty}(\Omega,\R^N)\cap \WW^{1,1}_0(\Omega,\R^N)$. 
\\
For $1\leq p\leq \infty$ and $u\in\WW^{1,p}(\Omega,\R^N)$, $\WW^{1,p}_u(\Omega,\R^N)$ denotes the affine space $u+\WW^{1,p}_0(\Omega,\R^N)$;
\item the letter $c$ will be used to denote a constant and it may change its specific value from line to line in a chain of equations. 
\end{itemize}

\section{Preliminary results}\label{Sec:preliminaries}
It is well known  that, under the assumptions \ref{H1:pg} and \ref{H2:QC}, the following estimates hold: there exists a constant $c=c(L)>0$ such that, $\forall(x,u,z),(x,u,w)\in \Omega\times\R^N\times\R^{N\times n}$, 
\begin{equation}\label{LipFz}
|F(x,u,z)-F(x,u,w)|\leq c (1+|z|^{p-1}+|w|^{p-1})|z-w|
\end{equation}
and

\begin{equation}
|F_z(x,u,z)|\leq c(1+|z|^{p-1}).
\end{equation}
See \cite{Dac,Giusti} as references for these estimates. 

Another classical and useful result concerns the following growth properties for shifted frozen integrands:
\begin{lemma}\label{lemma:FrozenShift}
Let $m>0$ and let $(x_0,u_0,z_0)\in\R^{N\times n}$ be fixed, with $|u_0|+|z_0|\leq m$. Assume that $F\colon\Omega\times\R^N\times\R^{N\times n}\to\R$ is of class $\CC^2$ in the $z$ variable and that it satisfies assumptions \ref{H1:pg} and \ref{H2:QC}. We define the shifted frozen integrand 
\begin{align*}
\bar{F}(z):=&F(x_0,u_0,z_0+z)-F(x_0,u_0,z_0)-F_z(x_0,u_0,z_0)[z]\\
& = \int_0^1\! (1-t)F_{zz}(x_0,u_0,z_0+tz)[z,z]\,\dd x.
\end{align*}
Then, there exists a constant $c=c(L,m,F'')>0$ such that,  for every $z\in\R^{N\times n}$, we have that
\begin{align*}
|\bar{F}(z)|& \leq c\left(|z|^2+|z|^p \right)\\
|\bar{F}'(z)|& \leq c\left(|z|+|z|^{p-1}\right).
\end{align*}
Furthermore, for every $z,w\in\R^{N\times n}$, it holds that
\begin{align*}
|\bar{F}(z)-\bar{F}(w)|\leq c \left(|z| +|w| +|z|^{p-1} + |w|^{p-1}  \right)|z-w|.
\end{align*}
\end{lemma}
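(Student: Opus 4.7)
My plan is to prove all three estimates by reducing them to pointwise bounds on the second differential $F_{zz}$ of the integrand and then integrating, exploiting the representation of $\bar F$ that is already recorded in the statement.

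The starting point is the classical fact that, under hypotheses \ref{H0}--\ref{H2:QC}, the second derivative of $F$ in $z$ satisfies the growth
\begin{equation*}
|F_{zz}(x,u,z)|\leq c(L)\bigl(1+|z|^{p-2}\bigr)
\end{equation*}
for every $(x,u,z)\in\Omega\times\R^N\times\M$; this is the standard consequence of the $p$-growth \ref{H1:pg} combined with the quasiconvexity bound of Lemma 2.2 type in \cite{Dac,Giusti}. Given $|u_0|+|z_0|\leq m$, one sees at once that for $t\in[0,1]$
\begin{equation*}
|F_{zz}(x_0,u_0,z_0+tz)|\leq c(L,m)\bigl(1+|z|^{p-2}\bigr).
\end{equation*}

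Once this is in hand, estimate (i) follows immediately from the integral representation recorded in the statement:
\begin{equation*}
|\bar F(z)|\leq\int_0^1(1-t)\,|F_{zz}(x_0,u_0,z_0+tz)|\,|z|^2\,\dd t\leq c\bigl(|z|^2+|z|^p\bigr).
\end{equation*}
For estimate (ii) I would differentiate under the integral sign, or equivalently use the fundamental theorem of calculus,
\begin{equation*}
\bar F'(z)=F_z(x_0,u_0,z_0+z)-F_z(x_0,u_0,z_0)=\int_0^1 F_{zz}(x_0,u_0,z_0+tz)[z]\,\dd t,
\end{equation*}
which, after applying the same pointwise bound, yields $|\bar F'(z)|\leq c(|z|+|z|^{p-1})$.

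For the incremental bound (iii) I would simply connect $w$ to $z$ by the segment $s\mapsto w+s(z-w)$ and write
\begin{equation*}
\bar F(z)-\bar F(w)=\int_0^1\bar F'\bigl(w+s(z-w)\bigr)[z-w]\,\dd s.
\end{equation*}
Applying (ii) on the segment, together with $|w+s(z-w)|\leq|z|+|w|$ and the elementary inequality $(|z|+|w|)^{p-1}\leq c(|z|^{p-1}+|w|^{p-1})$ valid for $p\geq 2$, delivers the desired estimate. The only conceptual step is the pointwise $F_{zz}$ bound; every subsequent calculation is routine integration, so I do not foresee a substantive obstacle beyond quoting the standard growth of $F_{zz}$ from \ref{H0}--\ref{H2:QC}.
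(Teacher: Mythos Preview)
Your argument rests on the global pointwise bound $|F_{zz}(x,u,z)|\leq c(L)(1+|z|^{p-2})$, but this estimate does \emph{not} follow from \ref{H0}--\ref{H2:QC}. Quasiconvexity together with the $p$-growth \ref{H1:pg} yields the $(p-1)$-growth of $F_z$ recorded just before the lemma (this is the result referenced to \cite{Dac,Giusti}), but there is no analogous mechanism passing from first to second derivatives: rank-one convexity gives only a \emph{lower} bound $F_{zz}(z)[\xi,\xi]\geq 0$ along rank-one directions, never an upper bound. A scalar counterexample with $p=2$, $n=N=1$: set $F''(z)=1+\sum_{k\geq 1}\phi_k(z)$ where each $\phi_k\geq 0$ is a smooth bump of height $k$ and width $k^{-3}$ centred at $k$. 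Then $F$ is $C^2$, convex (hence quasiconvex), and $|F(z)|\leq c(1+z^2)$ since $\int\phi_k\leq k^{-2}$; yet $F''(k)\geq k$ is unbounded, so no bound of the form $|F''|\leq c$ can hold.

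The paper simply cites \cite[Lemma 2.3]{AF87}, and the underlying argument does not pass through a global Hessian bound but instead splits according to the size of $|z|$. For $|z|\leq 1$ one uses only that $F_{zz}$ is continuous, hence bounded on the compact set $\{w:|w|\leq m+1\}$, so the integral representation you wrote gives $|\bar F(z)|\leq c(m)|z|^2$ and $|\bar F'(z)|\leq c(m)|z|$. For $|z|>1$ one abandons the integral formula and estimates each term of $\bar F$ directly from the growth of $F$ and of $F_z$:
\[
|\bar F(z)|\leq |F(x_0,u_0,z_0+z)|+|F(x_0,u_0,z_0)|+|F_z(x_0,u_0,z_0)|\,|z|\leq c(L,m)\,|z|^p,
\]
and likewise $|\bar F'(z)|=|F_z(x_0,u_0,z_0+z)-F_z(x_0,u_0,z_0)|\leq c(L,m)|z|^{p-1}$. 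Your step (iii), deducing the incremental bound from (ii) via the segment $s\mapsto w+s(z-w)$, is correct and is exactly how one finishes; the gap is solely in how you justify (i) and (ii) for large $|z|$.
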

Under the assumption that $|u_0|+|z_0|\leq m$, the proof of this lemma follows from the result of Acerbi \& Fusco in \cite[Lemma 2.3]{AF87}.

We now state the following version of Gehring's Lemma. A proof of this result can be found in \cite[Ch. V. Proposition 1.1]{Gia}.  The following version of the result was proved by Stredulinsky in \cite{Stredulinsky}. See \cite{ME75} and \cite[Proposition 5.1]{GiaqMod} for earlier developments generalizing Gehring's Lemma in this direction.

\begin{theorem}\label{theo:Gehring}
Let  $\mathbb{B}\subseteq\R^n$ be an open ball and let $p>1$. For $M\geq 1$, let $f,g\in \LL^p(\mathbb{B},\R^M)$, $\zeta\in(0,1)$ and assume that $\theta\in (0,1)$ and $K>0$ are constants such that, for every $B_r=B(x_0,r)\subseteq \mathbb{B}$, 
\[
\dashint_{B_{\zeta r}} |g|^p \,\dd x\leq \theta\, \dashint_{B_{ r}} |g|^p \,\dd x + K\left( \dashint_{B_{ r}} |g| \,\dd x \right)^p + \dashint_{B_{ r}} |f|^p \,\dd x.
\]
Then, there exist $\varepsilon=\varepsilon(K,\theta,q,n)>0$ and a constant $c=c(K,\theta,q,n)>0$ such that, for every $q\in [p,p+\varepsilon)$, it holds that $g\in \LL^q(\frac{1}{2}\mathbb{B},\R^M)$ and
\[
\left(\dashint_{\frac{1}{2}\mathbb{B}} |g|^q \,\dd x\right)^{\frac{1}{q}}\leq c\, \left(\dashint_{\mathbb{B}} |g|^p \,\dd x\right)^{\frac{1}{p}} + c\,\left( \dashint_{\mathbb{B}} |f|^q \,\dd x\right)^{\frac{1}{q}}.
\]

\end{theorem}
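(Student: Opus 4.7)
The approach I would take is the classical Gehring self-improvement via distribution-function estimates and a Calder\'on--Zygmund covering. The key observation is that the hypothesis is a \emph{weak reverse H\"older inequality}: the quantity $\bigl(\dashint_{B_r}|g|\,\dd x\bigr)^p$ on the right-hand side is genuinely smaller than $\dashint_{B_r}|g|^p\,\dd x$ (by Jensen's inequality, strictly so whenever $|g|$ is not essentially constant on $B_r$), and it is precisely this gap that upgrades the integrability of $g$ from $\LL^p$ to $\LL^{p+\varepsilon}$. The extra term $\theta\,\dashint_{B_r}|g|^p\,\dd x$ is a perturbation that does not change scaling but, after self-improvement, reappears multiplied by $\theta$ times a bounded constant; the strict inequality $\theta<1$ is exactly what provides room to absorb it back into the left.

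Concretely, I would set $G:=|g|^p$, $F:=|f|^p$ and fix two concentric balls $\tfrac{1}{2}\mathbb{B}\subset\mathbb{B}$. For each level $t$ above a threshold $t_0$ controlled by $\dashint_\mathbb{B} G\,\dd x$, a Vitali-type Calder\'on--Zygmund decomposition of $\{G>t\}\cap \tfrac{1}{2}\mathbb{B}$ produces a disjoint family of balls $B_i=B(x_i,r_i)\subseteq \mathbb{B}$ with $\zeta^{-1}B_i\subseteq \mathbb{B}$ on which $\dashint_{B_i} G\,\dd x \sim t$ and with $G\leq t$ Lebesgue-a.e.\ outside the union. Applying the hypothesis on each $B_i$ (playing the role of the small ball $B_{\zeta r}$, with $r=r_i/\zeta$), summing over $i$, and estimating the $\LL^1$--average via H\"older to move it to a slightly higher scale, one obtains a good-$\lambda$ inequality of the form
\[
\int_{\{G>At\}\cap \tfrac{1}{2}\mathbb{B}} G\,\dd x \;\leq\; C\theta \int_{\{G>t\}\cap \mathbb{B}} G\,\dd x \;+\; C\int_{\{F>\eta t\}\cap \mathbb{B}} F\,\dd x,
\]
for constants $A>1$, $\eta>0$ depending on $n$, $K$ and $\theta$. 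Multiplying by $t^{\varepsilon-1}$ and integrating $t\in(t_0,\infty)$ converts this, via Fubini, into an $\LL^{1+\varepsilon}$ reverse H\"older estimate; the resulting coefficient in front of $\int_\mathbb{B} G^{1+\varepsilon}\,\dd x$ is proportional to $A^\varepsilon\theta$, which for $\varepsilon$ small is strictly less than $1$, so that term is absorbed. Reverting $G=|g|^p$, $F=|f|^p$ and setting $q=p(1+\varepsilon)$ yields the claimed estimate.

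The main obstacle is the quantitative bookkeeping needed to guarantee that the absorption inequality $CA^\varepsilon\theta<1$ remains valid across the relevant range of exponents; it is exactly this balance that forces $\varepsilon$ to depend on $K$, $\theta$, $q$ and $n$ in the statement. A secondary technical issue is geometric: the Vitali covering must be carried out with balls whose $\zeta^{-1}$--dilates remain inside $\mathbb{B}$, which is the reason why the conclusion is stated on the concentric subball $\tfrac{1}{2}\mathbb{B}$ rather than on $\mathbb{B}$ itself. The $f$-term is handled along identical lines throughout the argument and contributes the $\LL^q$-norm of $f$ on the right-hand side of the final estimate.
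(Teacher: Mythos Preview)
The paper does not prove this theorem at all: it is stated as a known result, with a proof attributed to Stredulinsky \cite{Stredulinsky} and a reference to \cite[Ch.~V, Proposition~1.1]{Gia} and \cite[Proposition~5.1]{GiaqMod} for the underlying generalization of Gehring's lemma. So there is no proof in the paper to compare your proposal against.

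That said, your sketch is essentially the standard route taken in those references: a Calder\'on--Zygmund/Vitali covering at level sets of $|g|^p$, a good-$\lambda$ distribution-function inequality, and integration against $t^{\varepsilon-1}$ to self-improve, with the condition $\theta<1$ providing the room for absorption. The one point that deserves more care than you indicate is the handling of the extra $\theta$-term when passing from the hypothesis to the good-$\lambda$ inequality: in Stredulinsky's argument this term is genuinely the delicate part (it does not simply pass through the covering unchanged), and the quantitative balance ``$CA^\varepsilon\theta<1$'' you describe is the correct mechanism but requires keeping track of how the covering constants interact with $\zeta$ and $\theta$. Your geometric remark about needing the $\zeta^{-1}$-dilates of the covering balls to stay inside $\mathbb{B}$, forcing the conclusion onto $\tfrac{1}{2}\mathbb{B}$, is exactly right.
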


The following estimate is of standard use in regularity theory and it is an easy consequence of the convexity of the function $V$ and Jensen's inequality. We shall make use of it freely for different domains $\omega\subseteq\R^n$. 
\begin{lemma}\label{lemma:qminofMean}
Let $p\geq 2$ and let $\omega\subseteq\R^n$ be an open set. Then, there exists a constant $c=c(p)>0$ such that, for $n,M\geq 1$,  every $f\in \LL^p(\omega,\R^M)$ and every $\xi\in\R^{M}$, 
\begin{equation}
\dashint_\omega\!|V(f-(f)_\omega)|^2\, \dd x\leq c\, \dashint_\omega\!|V(f-\xi)|^2\, \dd x.
\end{equation}
\end{lemma}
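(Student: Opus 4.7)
The plan is to reduce the left-hand side, which is centered at the average $(f)_\omega$, to the right-hand side, which is centered at an arbitrary $\xi$, by exploiting the linearity of the averaging operator together with the convexity of $V$. The key algebraic observation is that, since $f\mapsto (f)_\omega$ is linear and reproduces constants,
\[
f(x) - (f)_\omega = \bigl(f(x)-\xi\bigr) - \bigl(f-\xi\bigr)_\omega.
\]
This lets me introduce $g:=f-\xi$ and reduces everything to bounding $\dashint_\omega |V(g-(g)_\omega)|^2\,\dd x$ in terms of $\dashint_\omega |V(g)|^2\,\dd x$.

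Next I would use the elementary ``triangle-type'' inequality for the two powers appearing in $V(z)=|z|^2+|z|^p$, namely $|a-b|^2\leq 2|a|^2+2|b|^2$ and $|a-b|^p\leq 2^{p-1}(|a|^p+|b|^p)$, to conclude that $V(a-b)\leq 2^{p-1}\bigl(V(a)+V(b)\bigr)$, and hence $|V(a-b)|^2\leq c(p)\bigl(|V(a)|^2+|V(b)|^2\bigr)$. Applying this pointwise with $a=g(x)$ and $b=(g)_\omega$ and then averaging over $\omega$ gives
\[
\dashint_\omega |V(g-(g)_\omega)|^2\,\dd x \leq c(p)\left( \dashint_\omega |V(g)|^2\,\dd x + \bigl|V((g)_\omega)\bigr|^2\right).
\]

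To close the argument I would invoke Jensen's inequality on the second term. Since $z\mapsto |z|^2$ and $z\mapsto |z|^p$ are convex on $\R^{N\times n}$ for $p\geq 2$, so is $V$, and so is $|V|^2$; hence
\[
|V((g)_\omega)|^2 = \Bigl|V\Bigl(\dashint_\omega g\,\dd x\Bigr)\Bigr|^2\leq \dashint_\omega |V(g)|^2\,\dd x.
\]
Substituting $g=f-\xi$ back in and absorbing constants yields the stated inequality with $c=c(p)$. There is no real obstacle here; the only point to be careful with is that the triangle-type inequality and Jensen both produce constants that grow with $p$, which is why the final constant $c$ depends on $p$ but not on $\omega$, $f$ or $\xi$.
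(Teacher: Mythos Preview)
Your argument is correct and is precisely the natural elaboration of what the paper has in mind: the paper does not write out a proof but only remarks that the estimate is ``an easy consequence of the convexity of the function $V$ and Jensen's inequality,'' and your substitution $g=f-\xi$, triangle-type splitting $|V(a-b)|^2\leq c(p)(|V(a)|^2+|V(b)|^2)$, and application of Jensen to the convex function $|V|^2=(V)^2$ (convex as the composition of the nondecreasing convex map $t\mapsto t^2$ on $[0,\infty)$ with the nonnegative convex $V$) carry this out exactly.
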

We conclude this section by stating the following two results for $\mathbb{A}$-harmonic maps. The regularity properties satisfied by these is the fundamental cornerstone that enables us to establish partial regularity in the general nonlinear case. 
\begin{theorem}\label{theo:DecayAHarmonic}
Let $\Omega\subset\R^n$ be open and bounded. Assume that $\mathbb{A}\colon\R^{N\times n}\times\R^{N\times n}\to\R$ is a symmetric bilinear form satisfying that
\begin{itemize}
\item[(i)] for every $\xi,\eta\in\R^{N\times n}$, $\mathbb{A}[\xi,\eta]\leq L|\xi||\eta|$ and
\item[(ii)] $\mathbb{A}$ is strongly quasiconvex, i.e., for every $\varphi\in\WW^{1,2}_0(\Omega,\R^N)$, 
\begin{equation}\label{QCA}
2\int_\Omega\! |\nabla\varphi|^2 \,\dd x\leq \int_\Omega\! \mathbb{A}[\nabla\varphi,\nabla\varphi]\,\dd x. 
\end{equation}
\end{itemize} 
Let $u\in\WW^{1,p}(\Omega,\R^N$ and take $B_R\subseteq\Omega$ to be an open ball. Then, there exists a unique $h\in\WW^{1,2}_u(B_R,\R^N)$ minimizing the functional
\[
v\mapsto \int_{B_R}\! \mathbb{A}[\nabla v,\nabla v]\,\dd x,
\]
over $\WW^{1,2}_u(B_R,\R^N)$ and, for a constant $c=c(n,L)$  it holds that, for every $r\in(0,R)$, 
\[
\dashint_{B_r}\! |V(\nabla h - (\nabla h)_r)|^2 \,\dd x \leq \,c\,\left( \frac{r}{R}\right)^2\dashint_{B_R}\! |V(\nabla h - (\nabla h)_R)|^2 \,\dd x.
\]
\end{theorem}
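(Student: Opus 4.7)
The plan is to exploit the fact that $h$ minimizes a quadratic functional with \emph{constant} coefficients, so it is a weak solution of a linear elliptic system, and then to combine classical interior regularity with a Taylor expansion at the required scale. First I would write down the Euler--Lagrange system $\int_{B_R}\mathbb{A}[\nabla h,\nabla\varphi]\,\dd x=0$ for all $\varphi\in\WW^{1,2}_0(B_R,\R^N)$, and observe that for a constant symmetric bilinear form the quasiconvexity \eqref{QCA} is equivalent to the Legendre--Hadamard condition $\mathbb{A}[\xi\otimes\eta,\xi\otimes\eta]\geq 2\ell|\xi|^2|\eta|^2$ for $\xi\in\R^N$ and $\eta\in\R^n$ (obtained by testing \eqref{QCA} against highly oscillating laminates). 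Together with the bound in (i), this places us in the classical setting of linear constant-coefficient Legendre--Hadamard systems.

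I would then invoke standard interior regularity (Nirenberg's difference-quotient method followed by iteration of the Caccioppoli inequality for such systems) to conclude that $h\in\CC^\infty(B_R,\R^N)$, together with the shifted $\LL^q$-to-$\LL^\infty$ Hessian estimate
\[
\|D^2 h\|_{\LL^\infty(B_{R/2})}\leq c\,R^{-1-n/q}\|\nabla h-\xi\|_{\LL^q(B_R)},\qquad c=c(n,\ell,L,q),
\]
valid for every $q\in[1,\infty]$ and every $\xi\in\R^{N\times n}$. The possibility of subtracting the arbitrary constant $\xi$ is immediate because $h-x\cdot\xi$ is again a minimizer of the same functional on $B_R$ (the first-order perturbation is a null Lagrangian for a constant-coefficient quadratic form).

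The core step is a Taylor expansion. For $r\leq R/2$, letting $x_0$ denote the centre of $B_r$, the Hessian estimate above gives $|\nabla h(x)-\nabla h(x_0)|\leq c\,r\,\|D^2 h\|_{\LL^\infty(B_{R/2})}$ for every $x\in B_r$. Choosing $q=2$ and $\xi=(\nabla h)_R$ yields
\[
\dashint_{B_r}\!|\nabla h-\nabla h(x_0)|^2\,\dd x\leq c\Bigl(\frac{r}{R}\Bigr)^{\!2}\dashint_{B_R}\!|\nabla h-(\nabla h)_R|^2\,\dd x,
\]
and choosing $q=p$ with the same $\xi$, combined with the elementary inequality $(r/R)^p\leq(r/R)^2$ (valid because $p\geq 2$ and $r\leq R$), produces the corresponding $\LL^p$-estimate. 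Adding the two and invoking Lemma \ref{lemma:qminofMean} to replace $\nabla h(x_0)$ by $(\nabla h)_r$ on the left-hand side gives the claimed bound whenever $r\leq R/2$.

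The remaining range $R/2<r<R$ follows trivially: by Lemma \ref{lemma:qminofMean} applied on $B_r$ with $\xi=(\nabla h)_R$ and the volume comparison $|B_R|/|B_r|\leq 2^n$ one has $\dashint_{B_r}|V(\nabla h-(\nabla h)_r)|^2\,\dd x\leq c\,\dashint_{B_R}|V(\nabla h-(\nabla h)_R)|^2\,\dd x$, and the factor $(r/R)^2\geq 1/4$ can be absorbed into the constant. The principal technical ingredient is the shifted interior Hessian estimate above; this is classical for constant-coefficient Legendre--Hadamard systems (see e.g.\ \cite{Gia,Giusti}), and once it is in hand the rest of the argument is a direct computation.
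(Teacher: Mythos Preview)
The paper does not actually prove this theorem; it merely states it and cites \cite[Theorem~10.7]{Giusti} as a reference for this classical result. Your sketch is precisely the standard argument behind that reference and is correct: the reduction to a constant-coefficient Legendre--Hadamard system, the shifted interior Hessian estimate, the Taylor expansion on $B_r$ for $r\le R/2$, and the trivial treatment of $R/2<r<R$ are all sound, and the use of Lemma~\ref{lemma:qminofMean} to pass from $\nabla h(x_0)$ to $(\nabla h)_r$ is exactly what is needed.
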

Good references for this classical result are \cite{GiaqMar}, \cite[Theorem 10.7]{Giusti}. 

It is worth recalling at this point that if a function $F^0\colon\R^{N\times n}\to\R$ is of class $\CC^2$ and satisfies the strong quasiconvexity condition that for every $z_0\in\R^{N\times n}$ and every $\varphi\in\WW^{1,2}_0(\Omega,\R^N)$ it holds that
\[
\int_\Omega\! |V(\nabla\varphi)|^2\,\dd x\leq \int_\Omega\! \left( F^0(z_0+\nabla\varphi)-F^0(z_0) \right) \,\dd x,
\]
then for every $\varphi\in \WW^{1,2}_0(\Omega,\R^N)$ the following strong quasiconvexity condition is satisfied:
\begin{equation}\label{eq:F''qc}
2 \int_\Omega\! |\nabla\varphi|^2\,\dd x\leq \int_\Omega\!  F^0_{zz}(z_0)[\nabla\varphi,\nabla\varphi]  \,\dd x.
\end{equation}
Indeed, for $\varphi\in \CC^{\infty}_0(\Omega,\R^N)$ this follows from the fact that $t=0$ minimizes the real valued function 
\[
\mathcal{J}(t):= \int_\Omega\! \left( F^0(z_0+t\nabla\varphi)-F^0(z_0)- |V(t\nabla\varphi)|^2 \right) \,\dd x
\]
and, hence, 
\begin{equation}
0\leq \mathcal{J}''(0)= \int_\Omega\! \left( F^0_{zz}(z_0)[\nabla\varphi,\nabla\varphi] -2|\nabla\varphi|^2 \right) \,\dd x.
\end{equation}
For $\varphi\in\WW^{1,2}_0(\Omega,\R^N)$, the inequality \eqref{eq:F''qc} follows then by approximation. 
\\
\\
The following result concerns the well known $\LL^p$-estimates for $\mathbb{A}$-harmonic maps. 
\begin{theorem}\label{Lpestimates}
Let $\mathbb{A}\colon\R^{N\times n}\times\R^{N\times n}\to\R$ be a symmetric bilinear form as in Theorem \ref{theo:DecayAHarmonic} and let $u\in \WW^{1,q}(B_R,\R^N)$. If $h\in\WW^{1,2}_u(B_R,\R^N)$ minimizes the functional
\[
v\mapsto \int_{B_R}\! \mathbb{A}[\nabla v,\nabla v]\,\dd x,
\]
and if $q\in [2,\infty)$, there exists a constant $K_q=K_q(L,n,q)>0$ such that, for every constant vector $z_0\in\R^{N\times n}$, 
\[
\|\nabla h-z_0\|_{\LL^{q}(B_R,\R^{N\times n})}\leq K_q \|\nabla u-z_0\|_{\LL^{q}(B_R,\R^{N\times n})}.
\]
On the other hand, if $u\in\WW^{1,\infty}(\Omega,\R^N)$,  there exists a constant $A_\infty=A_\infty(L,n)>0$ such that, for every constant vector $z_0\in\R^{N\times n}$, 
\[
[\nabla h-z_0]_{\mathrm{BMO}(B_R,\R^{N\times n})}\leq A_\infty \|\nabla u-z_0\|_{\LL^{\infty}(B_R,\R^{N\times n})}.
\]
\end{theorem}
 The proof for $q=2$ follows from the strong quasiconvexity condition. The proof for $q\in (2,\infty]$ can be found in \cite[Theorem 2.14 and Theorem 10.15]{Giusti}.

\section{Weak local minimizers}\label{Sec:WeakLocMin}

\subsection{An improved local minimality property with $\mathrm{BMO}$ variations}

In this section we will establish that weak local minimizers at which the second variation is strictly positive are, in fact, $\WW^{1,\mathrm{BMO}}$-local minimizers, in the terminology of \cite{JCC2017}. This was first observed for homogeneous integrands in \cite[Theorem 6.1]{KT} (see also \cite[Theorem 4.4]{JCC2017}). The impact of small $\WW^{1,\mathrm{BMO}}$-variations has been studied in \cite{SpectorSpectorBMO}, while interesting results concerning the regularity of extremals with $\mathrm{BMO}$-small gradient recently appeared in  \cite{CIrving}. 

\begin{definition}
Let $\phi\in \LL^1(\Omega,\R^{N\times n})$. We say that $\phi$ is of \textbf{bounded mean oscillation} if and only if 
\begin{equation*}
\underset{B(x,r)\subseteq\Omega}{\sup}\underset{B(x,r)}{\dashint}|\phi-(\phi)_{x,r}|\,\dd y<\infty.
\end{equation*}
In this case, we define the semi-norm
\begin{equation*}
[\phi]_{\mathrm{BMO}(\Omega,\R^{N\times n})}:=\underset{B(x,r)\subseteq\Omega}{\sup}\underset{B(x,r)}{\dashint}|\phi-(\phi)_{x,r}|\,\dd y<\infty
\end{equation*}
and we set
\begin{equation*}
\mathrm{BMO}(\Omega,\R^{N\times n}):=\bigl\{\phi\in \LL^1(\Omega,\R^{N\times n})\,\,\mathrm{ : } \,\,[\phi]_{\mathrm{BMO}(\Omega,\R^{N\times n})}
<\infty\bigr\}.
\end{equation*}
\end{definition}
In order to simplify the notation later on, for $(x,v,w)\in \Omega\times\R^n\times \R^{N\times n}$ we define the bilinear form $L(x,u,z)$ by
\begin{align}
L(x,u,z)[(v,w),(\hat{v},\hat{w})]:= &F_{uu}(x,u,z)v\cdot \hat v+ F_{uz}(x,u,z)v\cdot w+F_{uz}(x,u,z)\hat{v}\cdot \hat{w} \notag
\\
&+F_{zz}(x,u,z)w\cdot \hat{w}\label{eq:L}
\end{align}
for every $v,\hat{v}\in\R^N$ and every $w,\hat{w}\in\R^{N\times n}$. 

We shall also denote $F(x):=F(x,u(x),\nabla u(x))$ and we adopt the corresponding notation for all the partial derivatives of $F$. 

For the last part of this section we will assume that $u\in\WW^{1,\infty}(\Omega,\R^N)$ is a weak local minimizer of $\mathcal{F}$ at which the second variation is uniformly strictly positive, meaning that there exists a constant $c_0>0$ such that, for all $\varphi\in\WW_0^{1,\infty}(\Omega,\R^N)$, 
\begin{align}
&\int_{\Omega}\left(F_{uu}(x)[\varphi(x),\varphi(x)] + 2F_{uz}(x)[\nabla\varphi(x),\varphi(x)] + F_{zz}(x)[\nabla\varphi(x),\nabla\varphi(x)]\right)\dd x\notag \\
=&\int_{\Omega}L(x,u(x),\nabla u(x))[(\varphi,\nabla\varphi),(\varphi,\nabla\varphi)]\,\dd x\label{H5:SecVar}\tag{H5}\\
\geq & c_0 ||\nabla\varphi ||^2_2.\notag
\end{align}

\begin{lemma} \label{LemmagrowthG}
Let $F\colon{\Omega}\times \R^N\times \R^{N\times n}\rightarrow\R$ be such that  {\ref{H0b:C2}}, \ref{H1:pg}, \ref{H1b:Fu} and \ref{H2:QC} hold for some $p\geq 2$. Let $u\in\WW^{1,\infty}(\Omega,\R^N)$ and define the function $G\colon{\Omega}\times \R^N\times \R^{N\times n}\rightarrow\R$  by 
\begin{align*}
G(x,y,z):=&F(x,u(x)+y,\nabla u(x)+z)-F(x) -F_y(x)[ y]-F_z(x)[z]\notag\\
=&\int_0^1 (1-t)L(x, u(x)+ty,\nabla u(x)+tz)[(y,z),(y,z)],
\end{align*}
where, for each $(x,y,z)\in\Omega\times\R^N\times\R^{N\times n}$, the bilinear form $L(x,u,z)$ is defined as in \eqref{eq:L}.  

Then, there exists a constant $c=c(\|u\|_{{1,\infty}})>0$ such that, for each $x\in{\Omega}$, $y,\hat y\in\R^N$ and $z,\hat z\in\R^{N\times d}$, 
\[
|G(x,y,z) - G(x,\hat y,\hat z)|\leq c\left(A_{p-1}(y,z,\hat y,\hat z)|z-\hat z| + A_p(y,z,\hat y,\hat z)|y-\hat y|\right),
\]
where
\[
A_p(y,z,\hat y,\hat z) = |y|+|\hat y|+|z|+|\hat z|+|z|^p+|\hat z|^p.
\]
\end{lemma}
The proof of this result can be found in \cite[Lemma 4.4]{JCCKK} (see also \cite{GM}). It follows the truncation strategy developed by Acerbi \& Fusco in \cite{AF87}.

The regularity of the second partial derivatives of $F$ can be phrased in terms of the existence of a modulus of continuity for the function $L$, as we settle in the following lemma. 
\begin{lemma}\label{lemma:modL}
Assume that $F\colon\Omega\times\R^N\times\R^{N\times n}\to\R$ satisfies the condition \ref{H0b:C2} and let $m>1$ be fixed. There exists a modulus of continuity $\omega_L\colon[0,\infty)\to[0,1]$ such that $\omega_L$ is increasing, concave, $\omega_L(t)\geq 1$ for every $t\geq 1$, $\lim_{t\to 0}\omega_L(t)=0$, and with the property that, for a constant $c=c(m)>0$ and for every $(x,u,z),(x,v,w)\in\Omega\times\R^N\times\R^{N\times n}$ satisfying that $|u|+|v|+|z|+|w|\leq m+1$, it holds that
\begin{equation*}
|L(x,u,z)-L(x,v,w)|\leq c\,\omega_L(|u-v|+|z-w|). 
\end{equation*}
\end{lemma}
The proof follows a standard scheme and we only sketch it here for the convenience of the reader:
\begin{proof}
By assumption the function $L$ is continuous in its domain and it is bounded in any set of the form $\Omega\times \overline{B(0,m+1)}\subseteq\Omega\times\R^N\times\R^{N\times n}$. 
Let
\[
K:=1+\sup_{|u|+|z|\leq m+1}|L(x,u,z)|.
\]
We can then ensure that $K$ is well defined. Define now
\[
\omega_0(t):=\frac{1}{2K}\sup \biggl\{|L(x,u-v,z-w)| \mbox{ \textbf{:} } |u-v|+|z-w|\leq t \mbox{ and }|u|+|v|+|z|+|w|\leq m+1 \biggr\}.
\]
$\omega_L\colon[0,\infty)\to[0,1]$ can then be given as the concave envelope of the function $$\bar{\omega}(t):=\max\{\omega_0(t),\min\{t,1\} \}.$$ The function $\omega_L$ is a modulus of continuity for $L$ in the set ${\Omega}\times \overline{B(0,m+1)} \subseteq {\Omega}\times \R^N\times \R^{N\times n}$  and it satisfies all the desired properties.
\end{proof}

We will require the following definition and the subsequent lemmata, which generalize the Hardy-Littlewood-Fefferman-Stein maximal inequalities. 
\begin{definition}
Let $f\colon\R^n\rightarrow\R^{N\times n}$ be an integrable map. We define the {Hardy-Littlewood maximal function} by
\begin{equation*}
f^\star(x):=\underset{B(y,r)\ni x}{\sup}\,\underset{B(y,r)}{\dashint}|f(y)|\,\dd y,
\end{equation*}
where the supremum is taken over all balls $B(y,r)\subseteq\R^n$ containing $x$. Similarly, the {Fefferman-Stein maximal function} is given by
\begin{equation*}
f^{\#}(x):=\underset{B(y,r)\ni x}{\sup}\,\underset{B(y,r)}{\dashint}|f(y)-(f)_{y,r}|\,\dd y.
\end{equation*}
\end{definition}

\begin{lemma}\label{LemmaHLMaxVMO}
Let $\Phi\colon[0,\infty)\rightarrow[0,\infty)$ be a continuously increasing function with $\Phi(0)=0$. Assume, in addition, that $\Phi(t)=t^p A(t)$ for some $p>1$ and some increasing function $A\colon[0,\infty)\rightarrow[0,\infty)$.  Then, there exists a constant $\gamma=\gamma(n,p)$ such that
\begin{equation}\label{eq:f-f*}
\underset{\R^n}{\int}\Phi(|f|)\dd x\leq \underset{\R^n}{\int}\Phi(f^{\star})\dd x \leq\gamma\underset{\R^n}{\int}\Phi(2|f|)\,\dd x
\end{equation}
for all $f\in \LL^1(\R^n,\R^{N\times n})$.
\end{lemma}
The proof of the first inequality in this lemma follows from the fact that $\Phi$ is increasing and from Lebesgue's Differentiation Theorem, which implies that $|f(x)|\leq f^\star(x)$ for almost every $x\in\R^n$. For a proof of the second inequality we refer the reader to \cite[Lemma 5.1]{GrIwaMos}.
It is well known that both notions of maximal functions are related in the following way.
\begin{lemma}\label{lemmaFSMaxVMO}
Let $\Phi\colon[0,\infty)\rightarrow[0,\infty)$ be a continuously increasing function with $\Phi(0)=0$. Let $\varepsilon>0$ and $f\in  \LL^1(\R^n,\R^{N\times n})$. Then, 
\begin{equation}
\label{eqlemmaFSMaxVMO}
\underset{\R^n}{\int}\Phi(f^\star)\,\dd x\leq \frac{5^n}{\varepsilon}\underset{\R^n}{\int}\Phi\left(\frac{f^\#}{\varepsilon}\right)\,\dd x+2\cdot 5^{3n}\varepsilon\underset{\R^n}{\int}\Phi(5^n2^{n+1}f^\star)\,\dd x. 
\end{equation}
If, in addition, we have that
\begin{equation*}
\underset{t>0}{\sup}\frac{\Phi(2t)}{\Phi(t)}<\infty,
\end{equation*}
we can further conclude that there is a constant $\gamma_1=\gamma_1(n)$ such that, for every $f\in \LL^1(\R^n,\R^{N\times n})$ satisfying that $\underset{\R^n}{\int}\Phi(f^\star)\dd x<\infty$, it holds that
\begin{equation}
\label{eqlemmaFSMax2VMO}
\underset{\R^n}{\int}\Phi(f^\star)\dd x\leq \gamma_1\underset{\R^n}{\int}\Phi(f^\#)\, \dd x.
\end{equation}
\end{lemma}
The proof of (\ref{eqlemmaFSMaxVMO}) can be found, for example, in \cite{KT}. Inequality (\ref{eqlemmaFSMax2VMO}) follows easily from (\ref{eqlemmaFSMaxVMO}) under the given extra assumptions.

An important remark to make at this point is the following:
\begin{remark}\label{remark:PoincPhi}
If $\varphi\in\WW^{1,1}(\Omega,\R^N)$ is a Sobolev map, then for every $x\in\Omega$ we have, by Poincar\'e inequality, that for a constant $c=c(n,N)>0$, 
\begin{equation}
\varphi^\#(x):=\sup_{Q\ni x}\dashint_Q |\varphi-\varphi_Q|\, \dd y\leq c \sup_{Q\ni x}\dashint_Q |\nabla \varphi|\, \dd y  = c (\nabla\varphi)^* (x). 
\end{equation}

\end{remark}The main result of this section can now established and we state it as follows.
\begin{theorem}\label{theo:morethanWLMVMO}
Let $F\colon{\Omega}\times\R^N\times \R^{N\times n}\rightarrow\R$ be a function satisfying \ref{H0b:C2}, \ref{H1:pg}, \ref{H1b:Fu} and \ref{H2:QC} for some $2\leq p<\infty$. Let ${u}\in \WW^{1,\infty}(\Omega,\R^N)$ be an extremal with strictly positive second variation. In other words, assume that
\begin{equation}
\label{uextremalVMO}
\underset{\Omega}{\int} \left( F_y(x,u,\nabla u)[\varphi] + F_z(x,u,\nabla {u})[\nabla\varphi] \right) \dd x=0
\end{equation}
and that \eqref{H5:SecVar} is satisfied. 
Then, there exists a $\delta_*>0$ such that
\begin{equation*}
 \int_\Omega \! \left( F(x,u+\varphi,\nabla u+\nabla\varphi)-F(x,u,\nabla u) \right) \,\dd x 
\geq  \, c\int_{\Omega}\! |\nabla\varphi|^2\,\dd x 
\end{equation*}
for every $\varphi\in \WW_0^{1,\infty}(\Omega,\R^N)$ satisfying $[\nabla\varphi]_{\mathrm{BMO}}\leq\delta_*$.
\end{theorem}
\begin{proof}
Let $\varphi\in \WW_0^{1,\infty}(\Omega,\R^N)$. We define the sets:
\begin{align*}
A:=& \{x\in\Omega\mbox{ : } |\varphi(x)|+|\nabla\varphi(x)|\leq 1 \}; \\
B:= & \{x\in\Omega\mbox{ : } |\varphi(x)|+|\nabla\varphi(x)|>1 \}.
\end{align*}
Then, we have that
\begin{align*}
& \int_\Omega \! \left( F(x,u+\varphi,\nabla u+\nabla\varphi)-F(x,u,\nabla u) \right) \,\dd x\\
= & \int_\Omega \! \left( F(x,u+\varphi,\nabla u+\nabla\varphi)-F(x,u,\nabla u) -F_y(x,u,\nabla u)[\varphi]-F_z(x,u,\nabla u)[\nabla\varphi] \right) \,\dd x \\
= & \left[ \int_\Omega \! \left( F(x,u+\varphi,\nabla u+\nabla\varphi)-F(x,u,\nabla u) -F_y(x,u,\nabla u)[\varphi]-F_z(x,u,\nabla u)[\nabla\varphi] \right)\mathbbm{1}_{B } \,\dd x \right. \\
& \left. - \frac{1}{2}\int_\Omega\!  L(x,u,\nabla u)[(\varphi,\nabla\varphi)(\varphi,\nabla\varphi)] \mathbbm{1}_{B}\,\dd x \right]
\\
& + \left[ \int_\Omega\! \int_0^1 (1-t) L(x,u+t\varphi,\nabla u+t\nabla\varphi)[(\varphi,\nabla\varphi)(\varphi,\nabla\varphi)]\mathbbm{1}_{A} \,\dd t\,\dd x \right. \\
& -  \left. \int_\Omega\int_0^1\! (1-t) L(x,u,\nabla u)[(\varphi,\nabla\varphi)(\varphi,\nabla\varphi)] \mathbbm{1}_{A}\,\dd t\,\dd x\right]\\
& +\frac{1}{2}\int_\Omega\!  L(x,u,\nabla u)[(\varphi,\nabla\varphi)(\varphi,\nabla\varphi)]\,\dd x\\
=: & \,\mathrm{I}+\mathrm{II}+\mathrm{III}. 
\end{align*}
Note first that, by assumption \eqref{H5:SecVar}, 
\begin{equation*}
\mathrm{III}\geq c_0\int_\Omega\! |\nabla\varphi|^2\,\dd x. 
\end{equation*}
Now let $\omega_L\colon[0,\infty)\to[0,\infty)$ be a modulus of continuity for $L$ on the set
\[
\left\{ (x,y,z)\in\Omega\times \R^N \times\R^{N\times n}\mbox{ : } |y|+|z|\leq \|\nabla u\|_\infty+1\right\}
\]
and such that it satisfies the properties given by Lemma \ref{lemma:modL}. Using in particular that $\omega_L$ is increasing, we can then estimate the term $\mathrm{II}$ as follows:
\begin{align*}
\mathrm{II}\geq & - c\int_\Omega\! \omega_L\left(|\varphi|+|\nabla\varphi|  \right)(|\varphi|^2+|\nabla\varphi|^2)\mathbbm{1}_{A}\,\dd x\\
 \geq & - c\int_\Omega\! \omega_L\left(|\varphi|+|\nabla\varphi|  \right)(|\varphi|^2+|\nabla\varphi|^2+|\varphi|^{2p}+|\nabla\varphi|^{2p})\mathbbm{1}_{A}\,\dd x.
\end{align*}
The need to consider the second inequality will become evident once we estimate the remaining term. Indeed, using Lemma \ref{LemmagrowthG} and the fact that $|\varphi(x)|+|\nabla\varphi(x)|>1$ for $x\in B$, we obtain, for a constant $c>0$ that depends on $\|u\|_{{1,\infty}}$, that
\begin{align*}
I\geq & - c\int_\Omega\! (1+|\varphi|+|\nabla\varphi|+|\nabla\varphi|^{p-1})|\nabla\varphi|\mathbbm{1}_{B }\,\dd x \\
& - c\int_\Omega\! (1+|\varphi|+|\nabla\varphi|+|\nabla\varphi|^{p})|\varphi|\mathbbm{1}_{B }\,\dd x \\
\geq & - c\int_\Omega\! (|\varphi|+|\nabla\varphi|+|\nabla\varphi|^{p-1})|\nabla\varphi|\mathbbm{1}_{B } \,\dd x\\
& - c\int_\Omega\! (|\varphi|+|\nabla\varphi|+|\nabla\varphi|^{p})|\varphi|\mathbbm{1}_{B }\,\dd x\\
 \geq & - c\int_\Omega\! (|\varphi|^{2}+|\nabla\varphi|^{2}+|\varphi|^{2p}+|\nabla\varphi|^{2p})\mathbbm{1}_{B } \,\dd x\\
  = & - c\int_\Omega\! \omega_L(|\varphi|+|\nabla\varphi|)(|\varphi|^2+|\nabla\varphi|^2+|\varphi|^{2p}+|\nabla\varphi|^{2p})\mathbbm{1}_{B }\,\dd x.
\end{align*}
For the last inequality before the equal sign we have used that, if $a,b>0$, then $ab\leq \frac{1}{2}(a^2+b^2)$ and, in addition, that if $1\leq q\leq s<\infty$, then for every $a>1$, it holds that $a^q\leq a^s$. On the other hand, the last identity follows from the fact that $\omega_L(t)=1 $ for $t\geq 1$. 

Compiling all the estimates above, and using that for $s\geq 1$ and $a,b>0$, $(a^s+b^s)\leq c_s(a+b)^s$, we can further obtain that, for a new constant $c>0$, 
\begin{align}\label{eq:PrefinalBMOlocmin}
& \int_\Omega \! \left( F(x,u+\varphi,\nabla u+\nabla\varphi)-F(x,u,\nabla u) \right) \,\dd x\notag \\
\geq &  c_0\int_\Omega\! |\nabla\varphi|^2\,\dd x - c\int_\Omega\! \omega_L(|\varphi|+|\nabla\varphi|)\left[( |\varphi|+|\nabla\varphi|)^2+( |\varphi|+|\nabla\varphi|)^{2p}\right]\,\dd x.
\end{align}
We now define the function $\Phi_s(t):=\omega_L(t)t^s$, with $s\in\{2,2p\}$. Note that, since $\Phi_s$ is increasing, then for every $a,b\geq 0$ we have that
\begin{equation}\label{eq:PhiAdit}
\Phi_s(a+b)\leq \Phi_s(2a)+\Phi_s(2b). 
\end{equation}
This implies that
\begin{equation}
\Phi_s(|\varphi|+|\nabla\varphi|)\leq \Phi_s(2|\varphi|)+\Phi_s(2|\nabla\varphi|). 
\end{equation}
On the other hand, by applying to the function $2\varphi$ Lemmata \ref{LemmaHLMaxVMO}, and \ref{lemmaFSMaxVMO}, Remark \ref{remark:PoincPhi} and then Lemma \ref{lemmaFSMaxVMO} one more time, and assuming that $\varphi$ takes the value of $0$ outside of $\Omega$, we whereby obtain that, for constants $c=c(n,N)>0$ and $\gamma=\gamma(n,p)>0$,  
\begin{equation}\label{eq:PoincApl}
\int_{\R^n} \! \Phi_s(2|\varphi|)\,\dd x\leq \gamma\int_{\R^n} \! \Phi_s(c(\nabla\varphi)^{\#})\,\dd x.
\end{equation}
 From \eqref{eq:PhiAdit}, \eqref{eq:PoincApl} and after applying Lemmata  \ref{LemmaHLMaxVMO} and \ref{lemmaFSMaxVMO} once again, but this time to the function $2\nabla\varphi$, we obtain that
 \begin{equation}\label{eq:BMOcontrolsp}
\int_{\R^n}\!\Phi_s( |\varphi| + |\nabla\varphi|  )\,\dd x \leq  \gamma \int_{\R^n} \! \Phi_s(c(\nabla\varphi)^{\#})\,\dd x.
\end{equation}
From inequalities \eqref{eq:PrefinalBMOlocmin} and \eqref{eq:BMOcontrolsp} we obtain that 
 \begin{align}
& \int_\Omega \! \left( F(x,u+\varphi,\nabla u+\nabla\varphi)-F(x,u,\nabla u) \right) \,\dd x\notag \\
 \geq &\,   c_0\int_\Omega\! |\nabla\varphi|^2\,\dd x - c\int_{\R^n}\! \omega_L(c(\nabla\varphi)^{\#})\left[( (\nabla\varphi)^{\#})^2+( (\nabla\varphi)^{\#})^{2p}\right]\,\dd x \label{eq:finalBMOlocmin}\\
 {\geq} & \, c_0\int_{\R^n}\! ((\nabla\varphi)^{\#})^2\,\dd x - c\int_{\R^n}\! \omega_L(c(\nabla\varphi)^{\#})\left[( (\nabla\varphi)^{\#})^2+( (\nabla\varphi)^{\#})^{2p}\right]\,\dd x. \notag 
\end{align}
For the second inequality above we have used \eqref{eq:f-f*} and \eqref{eqlemmaFSMax2VMO} once again. 

Note that, if $\delta_*\in (0,1)$ is such that $(\nabla\varphi)^{\#}<\delta_*$, the fact that $2p\geq 2$ will imply that that $((\nabla\varphi)^{\#})^{2p}<((\nabla\varphi)^{\#})^{2}$. Then, \eqref{eq:finalBMOlocmin} becomes
 \begin{align}
& \int_\Omega \! \left( F(x,u+\varphi,\nabla u+\nabla\varphi)-F(x,u,\nabla u) \right) \,\dd x\notag \\
 \geq & \, c_0\int_{\R^n}\! ((\nabla\varphi)^{\#})^2\,\dd x - c\int_{\R^n}\! \omega_L(c(\nabla\varphi)^{\#})( (\nabla\varphi)^{\#})^2\,\dd x. \notag 
\end{align}
Finally, since $\omega_L$ is continuous at $t=0$  and $\omega_L(0)=0$,  we can ensure that there exists $\delta_*\in (0,1)$ such that, if $0\leq t<\delta_*$, then
\[
c\omega_L(ct) < c_0.
\]
Whereby, for every $\varphi\in\WW^{1,\infty}_0(\Omega,\R^N)$ satisfying that $(\nabla\varphi)^{\#}<\delta_*$, we will have that, for a constant $c=c(n,N,p)>0$, 
\begin{equation*}
 \int_\Omega \! \left( F(x,u+\varphi,\nabla u+\nabla\varphi)-F(x,u,\nabla u) \right) \,\dd x \notag\\
\geq  \, c\int_{\R^n}\! ((\nabla\varphi)^{\#})^2\,\dd x .
\end{equation*}
Using again Lemmata \ref{LemmaHLMaxVMO} and \ref{lemmaFSMaxVMO}, we can further conclude that, for such $\varphi\in\WW^{1,\infty}_0(\Omega,\R^N)$, 
\begin{equation*}
 \int_\Omega \! \left( F(x,u+\varphi,\nabla u+\nabla\varphi)-F(x,u,\nabla u) \right) \,\dd x 
\geq  \, c\int_{\Omega}\! |\nabla\varphi|^2\,\dd x .
\end{equation*}
This concludes the proof of the theorem. 
\end{proof}

\section{The regularity result}\label{section:regularity}

The proof of Theorem \ref{theo:regularity} will follow in a classical way from the following decay estimate for the mean oscillations of the local minimizer $u$:
\begin{proposition}\label{Prop:FinalDecay}
Assume that $F$ and $u$ are as in Theorem \ref{theo:regularity}. For every $m>1$ there exist $\gamma\in(0,\beta)$, $\alpha\in(2\gamma,1)$, $R_\delta\in(0,1)$, $\tau_0\in (0,\frac{1}{4})$, and $\varepsilon_1\in (0,1)$, all depending on $n,N,L,F''$ and $m$, such that for every $\varepsilon\in(0,\varepsilon_1)$, if $\varrho\in (0,R_\delta)$, $|(u)_{x_0,\varrho}|+|(\nabla u)_{x_0,\varrho}|<m$ and $E(x_0,\varrho)<\varepsilon$, then
\begin{equation}
E(x_0,\tau_0\varrho) \leq c_0 R^{2\gamma}+\tau_0^\alpha E(x_0,\varrho). 
\end{equation}
\end{proposition}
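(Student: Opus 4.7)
I would follow the direct (non blow-up) strategy suggested by the author, comparing $u$ with the minimizer of a frozen quadratic functional. Set $u_0:=(u)_{x_0,\varrho}$, $z_0:=(\nabla u)_{x_0,\varrho}$ and introduce the bilinear form $\mathbb{A}[\xi,\eta]:=F_{zz}(x_0,u_0,z_0)[\xi,\eta]$, which by \eqref{eq:F''qc} is strongly quasiconvex with constant $\ell$ and, by Lemma \ref{lemma:FrozenShift} together with $|u_0|+|z_0|\leq m$, bounded above by a constant $L(m)$. Let $h\in u+\WW^{1,2}_0(B_\varrho,\R^N)$ minimize $v\mapsto\int_{B_\varrho}\mathbb{A}[\nabla v,\nabla v]\,\dd x$. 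Then Theorem \ref{theo:DecayAHarmonic} combined with Lemma \ref{lemma:qminofMean} gives, for every $\tau\in(0,\tfrac14)$,
\[
\dashint_{B_{\tau\varrho}}|V(\nabla h-(\nabla h)_{\tau\varrho})|^2\,\dd x \leq c\,\tau^2\dashint_{B_\varrho}|V(\nabla h - z_0)|^2\,\dd x,
\]
while Theorem \ref{Lpestimates} yields $\|\nabla h - z_0\|_{\LL^q(B_\varrho)}\leq c\,\|\nabla u - z_0\|_{\LL^q(B_\varrho)}$; together with the smallness of the excess (and with \eqref{eq:q=infty} when $q=\infty$), this guarantees that $h-u$ is an admissible perturbation for the $\WW^{1,q}$-local minimality of $u$.

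The heart of the argument is to estimate the comparison term $\dashint_{B_\varrho}|V(\nabla(u-h))|^2\,\dd x$. Exploiting $\mathcal{F}(u)\leq\mathcal{F}(h)$, I would split
\begin{align*}
0 &\geq \int_{B_\varrho}\left[F(x,u,\nabla u)-F(x_0,u_0,\nabla u)\right]\,\dd x\\
&\quad + \int_{B_\varrho}\left[F(x_0,u_0,\nabla u)-F(x_0,u_0,\nabla h)\right]\,\dd x\\
&\quad + \int_{B_\varrho}\left[F(x_0,u_0,\nabla h)-F(x,h,\nabla h)\right]\,\dd x.
\end{align*}
The first and third integrals are absorbed via \ref{H4:HoldCont} (with Poincar\'e's inequality on $u-u_0$ and $h-u_0$ to bound the $u$-argument of $\vartheta$) into an error of order $\varrho^{2\beta}\int_{B_\varrho}(1+|\nabla u|^p+|\nabla h|^p)\,\dd x$. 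The middle integral I would Taylor-expand at $z_0$: the linear part vanishes because $\int_{B_\varrho}(\nabla u-\nabla h)=0$, and the orthogonality $\int_{B_\varrho}\mathbb{A}[\nabla h - z_0,\nabla(u-h)]\,\dd x=0$ (Euler--Lagrange equation for $h$) collapses the quadratic part to $\tfrac12\int_{B_\varrho}\mathbb{A}[\nabla(u-h),\nabla(u-h)]\,\dd x$, plus a Taylor remainder controlled by the modulus of continuity of $F_{zz}(x_0,u_0,\cdot)$ on compact sets. Applying the quasiconvexity \eqref{QCA} of $\mathbb{A}$ and rearranging should yield
\[
\ell\,\dashint_{B_\varrho}|\nabla(u-h)|^2\,\dd x \leq c\,\varrho^{2\beta} + \tilde\omega(\varepsilon)\,E(x_0,\varrho),
\]
where $\tilde\omega(\varepsilon)\to 0$ as $\varepsilon\to 0$.

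To conclude, I would combine this bound with the $\mathbb{A}$-harmonic decay via the triangle inequality and Lemma \ref{lemma:qminofMean} at scale $\tau_0\varrho$, using $\dashint_{B_{\tau_0\varrho}}|V(\nabla(u-h))|^2\leq c\,\tau_0^{-n}\dashint_{B_\varrho}|V(\nabla(u-h))|^2$, to obtain
\[
E(x_0,\tau_0\varrho) \leq c\bigl(\tau_0^2 + \tau_0^{-n}\tilde\omega(\varepsilon)\bigr)\,E(x_0,\varrho) + c\,\tau_0^{-n}\,\varrho^{2\beta}.
\]
Choosing $\tau_0$ small enough that $c\tau_0^{2}\leq\tfrac12\tau_0^{\beta}$ (possible because $\beta<2$), and then $\varepsilon_1$ small enough that $c\tau_0^{-n}\tilde\omega(\varepsilon_1)\leq\tfrac12\tau_0^\beta$, gives the claim with $c_0:=c\tau_0^{-n}$. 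The main technical obstacle I foresee is producing the modulus $\tilde\omega$ in the estimate for $\nabla(u-h)$: the na\"ive Taylor-remainder bound has the form $\int\omega(|\nabla u-z_0|)(|\nabla u - z_0|^2 + |\nabla u - z_0|^p)\,\dd x$, which is not directly controlled by $E(x_0,\varrho)$ because $|\nabla u - z_0|$ need not be small pointwise. This is precisely where I would invoke the reverse H\"older inequality of Theorem \ref{theo:ReverseHolder} to gain slightly-better-than-$\LL^p$ integrability of $V(\nabla u - z_0)$, and then close the estimate by splitting $\{|\nabla u - z_0|\leq\lambda\}\cup\{|\nabla u - z_0|>\lambda\}$ with $\lambda=\lambda(\varepsilon)$ tuned via H\"older's inequality; the analogous estimate for $\nabla h$ follows from Theorem \ref{Lpestimates}.
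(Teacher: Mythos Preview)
Your overall strategy matches the paper's: compare $u$ with the $\mathbb{A}$-harmonic minimizer $h$, control the comparison term using local minimality, \ref{H4:HoldCont}, and the Taylor remainder via the reverse H\"older inequality of Theorem~\ref{theo:ReverseHolder}, then feed this into the decay for $h$ and choose $\tau_0,\varepsilon_1$ exactly as you do. The decomposition you write is, up to reorganisation, the paper's splitting into the terms $\mathrm{I}$--$\mathrm{VI}$.

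There is, however, a genuine gap when $p>2$. From the quasiconvexity \eqref{QCA} of the \emph{quadratic} form $\mathbb{A}$ you only obtain control of $\dashint_{B_\varrho}|\nabla(u-h)|^2$, yet in the next step you need $\dashint_{B_{\tau\varrho}}|V(\nabla(u-h))|^2$, which contains the additional piece $|\nabla(u-h)|^p$. The crude bound $\dashint_{B_\varrho}|\nabla(u-h)|^p\leq c\,E(x_0,\varrho)$ coming from Theorem~\ref{Lpestimates} carries no small prefactor of the type $\tilde\omega(\varepsilon)$ or $\tau^2$, so after multiplying by $\tau^{-n}$ it cannot be absorbed. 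The paper fixes this by using the strong quasiconvexity \ref{H2:QC} of the full frozen integrand $F^0$ (not just of $\mathbb{A}$), which yields directly
\[
\ell\dashint_{B_R}|V(\nabla u-\nabla h)|^2\,\dd x\le\dashint_{B_R}\bigl(F^0(\nabla u-\nabla h+z_0)-F^0(z_0)\bigr)\,\dd x;
\]
after inserting and removing $P(\nabla u-\nabla h+z_0)$ one recovers your decomposition \emph{plus} an extra remainder term of the same nature as your Taylor remainders, and then the quadratic part collapses via the very orthogonality you invoke. A secondary point: Theorem~\ref{theo:ReverseHolder} and Lemma~\ref{PrelHInt} both pass from a larger to a smaller ball and are stated for $z_0=(\nabla u)_{2R_0}$, so defining $h$ on the full ball $B_\varrho$ with $z_0=(\nabla u)_\varrho$ creates a scale mismatch; the paper avoids this by taking $\varrho=4R$, freezing $(u_0,z_0)$ at scale $4R$, and minimizing $P$ only on $B_R$.
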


There are several steps to follow in order to achieve this decay estimate. Furthermore, we recall that the minimality assumption is essential  for a regularity result of this nature, since maps that are merely solutions to the weak Euler-Lagrange equations associated to the problem, need not satisfy the stated partial regularity property \cite{KT,MS03}. 

In order to make use of the local minimality condition, we will need to build suitable test functions that are sufficiently small in $\WW^{1,q}(\Omega,\R^N)$. With this in mind, we make the following observation.

\begin{lemma}\label{remark:varphi}
Take $q\in[1,\infty]$ and assume that $u\in\WW^{1,q}_{\mathrm{loc}}(\Omega,\R^N)$. Let $B_R= B(x_0,R)$, with $R\leq 1$, and suppose that $B_r=B(y_0,r)\subseteq B_R$. Furthermore, let $\zeta\in (0,1)$ and assume that $\rho$ is a cut-off function such that 
\[
\mathbbm{1}_{B_{\zeta r}}\leq \rho\leq \mathbbm{1}_{B_r} \,\,\mbox{ and }\,\,|\nabla\rho|\leq \frac{1}{r(1-\zeta)}. 
\]
 In addition, for a given $m>0$, let $z_0:=(\nabla u)_{B_R}\in\R^{N\times n}$ and assume that $|z_0|\leq m$. Finally,  let $a\colon\R^n\to\R^N$ be an affine function of the form $a(x):=z_0\cdot(x-y_0)+(u)_{y_0,r}$ and define
\[
\varphi:=\rho(u-a).
\]
If $q\in[1,\infty)$, then for any given $\delta>0$ there exist a constant $c>0$, and a radius $ R_\delta=R_\delta(c,\zeta,m,q)\in(0,1)$ such that, if $0<R<R_\delta$ and $B_r\subseteq B_R$, then
\begin{equation}\label{eq:philessdelta}
\|\nabla\varphi  \|_{\LL^q(\Omega,\R^{N\times n})}<\delta. 
\end{equation} 
On the other hand, if $q=\infty$ and $\delta>0$ is given, there exists $\delta_0=\delta_0(\delta,c,\zeta,m,q)>0$ such that, if  $\delta_1\in(0,\delta_0)$ and \eqref{eq:q=infty} holds, then we can find a radius $ R_\delta=R_\delta(c,\zeta,m,q)\in(0,1)$ with the property that, if $0<R<R_\delta$ and $B_r\subseteq B_R$, then
\begin{equation}\label{eq:philessdeltainfty}
\|\nabla\varphi  \|_{\LL^\infty(\Omega,\R^{N\times n})}<\delta. 
\end{equation} 
\end{lemma}
\begin{proof}
If $1\leq q<\infty$, then Poincar\'e inequality implies that there is a constant $c=c(n,N,q,m)$ such that
\begin{equation}\label{eq:Testq<infty2}
\|\nabla\varphi  \|_{\LL^q(B_r,\R^{N\times n})}\leq c\left( 1+\frac{1}{1-\zeta} \right) \|\nabla u-z_0\|_{\LL^q(B_r,\R^{N\times n})}.
 \end{equation}
If $B_r\subseteq B_R$, it clearly follows that 
 \begin{equation}\label{eq:Testq<infty}
\|\nabla\varphi  \|_{\LL^q(B_r,\R^{N\times n})}\leq c\left( 1+\frac{1}{1-\zeta} \right) \|\nabla u-z_0\|_{\LL^q(B_R,\R^{N\times n})}.
 \end{equation}
Note that, under the assumptions that  $u\in\WW^{1,q}_{\mathrm{loc}}(\Omega,\R^N)$ and $|z_0|\leq m$, for a given $\delta>0$ we will be able to make $\|\nabla\varphi  \|_{\LL^q(\Omega,\R^{N\times n})}<\delta$ by taking $B(y_0,r)\subseteq B(x_0,R)$, with $R>0$ sufficiently small. This concludes the proof  of \eqref{eq:philessdelta} for the case $q\in[1,\infty)$. 
 
If $q=\infty$, our aim is to prove that, if $B_r\subseteq B_R$, then for a constant $c=c(N)>0$, 
 \begin{equation}\label{eq:TestInfty}
\|\nabla\varphi  \|_{\LL^\infty(B_r,\R^{N\times n})}\leq c\left( 1+\frac{1}{1-\zeta} \right) \|\nabla u-z_0\|_{\LL^\infty(B_R,\R^{N\times n})}.
 \end{equation}
 In order to do this, we will first show that 
  \begin{equation}\label{eq:TestInfty2}
\|\nabla\varphi  \|_{\LL^\infty(B_r,\R^{N\times n})}\leq c\left( 1+\frac{1}{1-\zeta} \right) \|\nabla u-z_0\|_{\LL^\infty(B_r,\R^{N\times n})}.
 \end{equation}
 Inequality \eqref{eq:TestInfty} will then follow after taking now the $\LL^\infty$  norm from the right hand side on the larger ball $B_R$. 
 
In order to show \eqref{eq:TestInfty2}, we follow the ideas for the calculations made in \cite{KT}. We deal with each coordinate function of $u-a$ and, for a fixed $1\leq k\leq N$, we take $x_k\in B(y_0,r)$ and $y_k\in \overline{B(y_0,r)}$ such that:
\begin{itemize}
\item $(u-a)^{(k)}(x_k)=(u-a)^{(k)}_{y_0,r}=0$;
\item $|(u-a)^{(k)}(y_k)|=\sup_{B(y_0,r)} |(u-a)^{(k)}|$.
\end{itemize}
Then, by the Fundamental Theorem of Calculus applied over the segment $[x_k,y_k]$ to each coordinate function, we obtain that
\begin{align*}
\sup_{B(y_0,r)} |u-a|\leq &\, N\max_{1\leq k\leq N}\left|  \int_{x_k}^{y_k} \nabla (u-a)^{(k)}  \right|\\
\leq & \,2Nr\,\|\nabla (u-a)\|_{\LL^{\infty}(B_r,\R^{N\times n})}\\
= & \, 2Nr \,\|\nabla u-z_0\|_{\LL^{\infty}(B_r,\R^{N\times n})}.
\end{align*}
This readily implies \eqref{eq:TestInfty2}.

On the other hand,  if $\delta>0$ is given and we take $B(y_0,r)\subseteq B(x_0,R)$ and $z_0:=(\nabla u)_{B_R}$,  inequality \eqref{eq:TestInfty} implies  that, if $0<\delta_1< \tfrac{\delta}{c(1+1/(1-\zeta))}$ and \eqref{eq:q=infty} holds,  there exists  $R_\delta=R_\delta(c,\zeta,m,q)>0$ such that, if $0<R<R_\delta$, then
\begin{equation*}
\|\nabla\varphi  \|_{\LL^\infty(\Omega,\R^{N\times n})}=\|\nabla\varphi  \|_{\LL^\infty(B_r,\R^{N\times n})}\leq  c\left( 1+\frac{1}{1-\zeta} \right) \|\nabla u-z_0\|_{\LL^\infty(B_R,\R^{N\times n})}<\delta, 
\end{equation*} 
and  \eqref{eq:philessdeltainfty} holds.
\end{proof}
We remark that, in what follows, $\zeta\in(0,1)$ will be fixed to be a suitable constant depending on other fixed parameters, none of which will depend on $\delta$.

\subsection{Higher integrability}\label{Sec:HigherInt}
The next step is to establish a preliminary higher integrability result. For the rest of this section we assume that $F$ satisfies conditions \ref{H0}-\ref{H4:HoldCont} for some $p\geq 2$.

\begin{lemma}[Preliminary higher integrability]\label{PrelHInt}
Let $q\in[1,\infty]$ and let $u\in\WW^{1,p}(\Omega,\R^N)\cap \WW^{1,q}_{\mathrm{loc}}(\Omega,\R^N)$ be a $\WW^{1,q}$-local minimizer of $\mathcal{F}$. Furthermore, if $q=\infty$, assume that \eqref{eq:q=infty} holds for $\delta_1\in (0,\delta_0)$, with $\delta_0$ as the one given by Lemma \ref{remark:varphi}. Then, for every $m>0$ there exists some $R_\delta=R_\delta(m,n,N,q,L)>0$ such that, if $2R_0\in(0,R_\delta)$, $B_{2R_0}=B(x_0,2R_0)\subseteq \Omega$ is a ball, and $|(\nabla u)_{x_0,2R_0}|\leq m$, then there exist $\varepsilon_0>0$ and $c=c(m,n,N,L)>0$ such that, for every $q_0\in [p,p+\varepsilon_0)$, 
\begin{align*}
\left( \dashint_{B_{\frac{1}{2} R_0}} |\nabla u|^{q_0}\, \dd x \right)^{\frac{1}{q_0}}\leq  c \,\left( \dashint_{B_{R_0} }|\nabla u|^{p } \,\dd x \right)^{\frac{1}{p}}+ c.
\end{align*}
\end{lemma}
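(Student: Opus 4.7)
The plan is to follow the classical Caccioppoli-plus-Gehring route for higher integrability, using \ref{H3:GleqF} as a substitute for the (unavailable) direct quasiconvexity of $F$ in the $u$ variable. The first task is to derive a Caccioppoli inequality of the second kind on arbitrary sub-balls $B(y,r)\subseteq B_{R_0}$. To this end I fix $\zeta\in(0,1)$, to be pinned down later close to $1$, take a cut-off $\rho$ with $\mathbf{1}_{B_{\zeta r}}\leq\rho\leq\mathbf{1}_{B_r}$ and $|\nabla\rho|\leq c/((1-\zeta)r)$, and set $a:=(u)_{y,r}$, $\varphi:=\rho(u-a)$; this fits Remark~\ref{remark:varphi} with $z_0=0$. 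Since $\|\nabla u\|_{\lebe^q(B(y,r))}\to 0$ as $r\to 0$ uniformly in $y$ when $q<\infty$ (and analogously via \eqref{eq:q=infty} when $q=\infty$, a case in which the conclusion is anyway immediate from $u\in\sobo^{1,\infty}$), I may choose $R_\delta>0$, depending only on $m,n,N,q,\ell,L$, so that $\|\nabla\varphi\|_{\lebe^q(B_r)}<\delta$ whenever $r\leq R_\delta$. Then $v:=u-\varphi$ is an admissible competitor and $\mathcal{F}(u)\leq\mathcal{F}(v)$.

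Next, comparing $\mathcal{F}(u)$ with $\mathcal{F}(v)$ on $B_r$, noting that $\nabla v=0$ on $B_{\zeta r}$ and $|\nabla v|\leq|\nabla u|+|\nabla\varphi|$ on $B_r\setminus B_{\zeta r}$, and invoking \ref{H3:GleqF} to bound $G(\nabla u)\leq F(x,u,\nabla u)$ on $B_{\zeta r}$ together with \ref{H1:pg} to bound $F(x,v,\nabla v)$, I arrive at
\[
\int_{B_{\zeta r}} G(\nabla u)\,\dd x \leq c\int_{B_r\setminus B_{\zeta r}}\bigl(1+|\nabla u|^p+|\nabla\varphi|^p\bigr)\,\dd x + c|B_r|.
\]
To bring $|\nabla u|^p$ to the left I then apply the strong quasiconvexity of $G$ at $z_0=0$ from \ref{H3:GleqF}, with the same test function $\varphi$: this gives $\ell\int_{B_r}|\nabla\varphi|^p\,\dd x\leq\int_{B_r}\bigl(G(\nabla\varphi)-G(0)\bigr)\,\dd x$, and since $\nabla\varphi=\nabla u$ on $B_{\zeta r}$ and $G(z)\leq L(1+|z|^p)$ (by $G\leq F$ and \ref{H1:pg}), rearranging yields $\ell\int_{B_{\zeta r}}|\nabla u|^p\,\dd x\leq\int_{B_{\zeta r}}G(\nabla u)\,\dd x+c\int_{B_r\setminus B_{\zeta r}}(1+|\nabla\varphi|^p)\,\dd x+c|B_r|$. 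Chaining the two displays, estimating $|\nabla\varphi|^p\leq c|\nabla u|^p+c((1-\zeta)r)^{-p}|u-a|^p$ on $B_r\setminus B_{\zeta r}$, and hole-filling to absorb the $B_r\setminus B_{\zeta r}$ integral, I arrive at the Caccioppoli inequality of the second kind
\[
\int_{B_{\zeta r}}|\nabla u|^p\,\dd x \leq \theta\int_{B_r}|\nabla u|^p\,\dd x + \frac{c}{((1-\zeta)r)^p}\int_{B_r}|u-(u)_{y,r}|^p\,\dd x + c|B_r|,
\]
for some $\theta\in(0,1)$ depending only on $\ell$ and $L$.

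Finally, I pass to averages, apply the sub-$p$ Sobolev-Poincar\'e inequality with $p_*:=np/(n+p)<p$, which gives $(\dashint_{B_r}|u-(u)_{y,r}|^p)^{1/p}\leq c\,r\,(\dashint_{B_r}|\nabla u|^{p_*})^{1/p_*}$, and fix $\zeta\in(0,1)$ close enough to $1$ that $\tilde\theta:=\theta\zeta^{-n}<1$, obtaining
\[
\dashint_{B_{\zeta r}}|\nabla u|^p\,\dd x \leq \tilde\theta\dashint_{B_r}|\nabla u|^p\,\dd x + K\biggl(\dashint_{B_r}|\nabla u|^{p_*}\,\dd x\biggr)^{p/p_*} + c.
\]
With $g:=|\nabla u|^{p_*}$ and exponent $s:=p/p_*>1$ this is precisely the Gehring-Stredulinsky hypothesis (Theorem~\ref{theo:Gehring}) on $\BB:=B(x_0,R_0)$; its conclusion, once the substitution is undone, gives the claimed inequality on $B_{R_0/2}$ for every $q_0\in[p,p+\varepsilon_0)$, with $\varepsilon_0:=p_*\varepsilon$.

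The main obstacle lies in the Caccioppoli step: the $u$-dependence of $F$ forbids freezing and using its own quasiconvexity, so \ref{H3:GleqF} has to be invoked in a double role -- as the coercive lower bound $G\leq F$ for the minimizer side, and as the source of a strong-quasiconvexity estimate for $G$ at $z_0=0$ tested against $\varphi$ for the boundary-layer side. This double use of $G$ is what produces a constant $\theta<1$ that is independent of $\zeta$, so that $\zeta$ can subsequently be chosen close to $1$ to make the Gehring hypothesis hold in averaged form; the constraint from local minimality, namely that $\zeta$ cannot be sent to $0$ without blowing up the Remark~\ref{remark:varphi} constant, is in this way absorbed into a single one-shot choice of $\zeta$.
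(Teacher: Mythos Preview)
Your argument is correct and follows essentially the same route as the paper: use \ref{H3:GleqF} (both as $G\leq F$ and for the strong quasiconvexity of $G$) together with local minimality to obtain a Caccioppoli inequality with a $\zeta$-independent $\theta<1$, then hole-fill, apply Poincar\'e--Sobolev, fix $\zeta$ close to $1$, and invoke Theorem~\ref{theo:Gehring}.

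There are two organisational differences worth noting. First, you take $z_0=0$ and affine map $a=(u)_{y,r}$, whereas the paper takes $z_0=(\nabla u)_{x_0,2R_0}$; the latter choice is what allows the paper to treat the case $q=\infty$ through \eqref{eq:q=infty} on the same footing, since then $\|\nabla u-z_0\|_{\LL^\infty(B_r)}$ is small. Your dismissal of $q=\infty$ as immediate is legitimate, but note it still relies on \eqref{eq:q=infty} together with $|(\nabla u)_{x_0,2R_0}|\leq m$ to bound $\|\nabla u\|_{\LL^\infty(B_{2R_0})}$ by $m+\delta$; otherwise the constant in the conclusion would depend on $\|\nabla u\|_{\LL^\infty}$ rather than on $m$. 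Second, you split the Caccioppoli step into two independent applications (minimality to bound $\int_{B_{\zeta r}}G(\nabla u)$, then quasiconvexity of $G$ at $0$ to pass to $\int_{B_{\zeta r}}|\nabla u|^p$), while the paper runs them in a single chain via the decomposition $z_0+\nabla\varphi=\nabla u-\nabla\psi$; the outcome and the constants are the same.
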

Before proceeding to proving the lemma, we remark that the need to make the assumption that it is for the radius $2R_0$ that it holds $|(\nabla u)_{x_0,2R_0}|\leq m$ will become clear while establishing \eqref{III} in the proof of Theorem \ref{theo:linearization}. Furthermore, fixing this constant vector at this stage will be necessary in order to use the local minimality condition for the case $q=\infty$. We shall elaborate further on this at the end of the following proof. 
\begin{proof}
Let $B_{R_0}=B(x_0,R_0)\subseteq \Omega$ be a ball of radius $R_0$ and let $B_r=B(y,r)\subseteq B_{R_0}$. 

We define  $z_0:=(\nabla u)_{x_0,2R_0}$ and assume that it satisfies $|z_0|\leq m$. Furthermore, we let 
\[
a(x):=z_0\cdot(x-y)+(u)_{y,r}.
\] 
On the other hand, for a $\zeta\in (0,1)$ yet to be determined, and that will depend exclusively on a constant $\tilde{\theta}=\tilde{\theta}(n,L)>0$, we let $\rho$ be a cut-off function such that 
\[
\mathbbm{1}_{B_{\zeta r}}\leq \rho\leq \mathbbm{1}_{B_r} \,\,\mbox{ and }\,\,|\nabla\rho|\leq \frac{1}{r(1-\zeta)}. 
\]
Define
\[
\varphi:=\rho(u-a), \hspace{1cm} \psi:=(1-\rho)(u-a).
\]

Then, by the strong quasiconvexity of $G$ and assumption \ref{H3:GleqF}, which at this step turns out to be crucial, we obtain that
\begin{align}
 \int_{B_r} \left( |\nabla\varphi|^p + {G(z_0)} \right) \,\dd x \leq & \int_{B_r} G(z_0+\nabla\varphi)\,\dd x\notag \\
\leq  & \int_{B_r} {F}(x,u,z_0+\nabla \varphi)\dd x  \label{HI1} \\
= & \int_{B_r} {F}(x,u,\nabla u -\nabla\psi )\,\dd x  \notag \\
=  & \int_{B_r} {F}(x,u,\nabla u)\,\dd x \notag\\
& + \int_{B_r} \left( {F}(x,u,\nabla u -\nabla\psi )-  {F}(x,u,\nabla u) \right) \,\dd x . \notag 
\end{align}
To estimate the first term on the right hand side we use the local minimality of $u$. Indeed, if $\|\nabla\varphi\|_{\LL^q(\Omega,\R^{N\times n})}<\delta$, then 
\begin{align}
& \int_{B_r} F(x,u,\nabla u) \,\dd x  \notag \\
\leq & \int_{B_r} F(x,u-\varphi, \nabla u -\nabla\varphi)\,\dd x \notag \\
= & \int_{B_r} F(x,\psi + a,  \nabla \psi + \nabla a)\, \dd x \notag \\
= & \int_{B_r\backslash B_{\zeta r}} F(x,\psi + a,\nabla \psi + z_0)\,\dd x + \int_{B_{\zeta r}}F(x,a,z_0)\,\dd x \label{HI2} \\
\stackrel{\mathrm{\ref{H1:pg}}}\leq & 
L \int_{B_r\backslash B_{\zeta r} }|\nabla\psi+z_0|^p \,\dd x + cr^n \notag\\
\leq & \, c \int_{B_r\backslash B_{\zeta r}} \left( |\nabla u- z_0|^p + \frac{|u-a|^p}{r^p(1-\zeta)^p} \right) \,\dd x  + cr^n. \notag 
\end{align}
Here, $c=c(L,p,m)$. On the other hand, by \eqref{LipFz}, and taking into account that $\psi=0$ in $B_{\zeta r}$ and that $|z_0|\leq m$, we obtain that 
\begin{align}
&\int_{B_r} \left( {F}(x,u,\nabla u -\nabla\psi)-  {F}(x,u,\nabla u) \right) \,\dd x \notag \\ \leq & \, c \int_{B_r\backslash B_{\zeta r}} \left( 1+|\nabla u |^{p-1} + |\nabla \psi|^{p-1} \right) |\nabla\psi|\,\dd x \label{HI3} \\
 \leq &\, c\int_{B_r\backslash B_{\zeta r}} \left( |\nabla u|^p + \frac{|u-a |^p}{r^p(1-\zeta)^p} \right) \,\dd x + cr^n. \notag 
\end{align}
From \eqref{HI1}-\eqref{HI3} and using that $G$ is locally bounded, we deduce that
\begin{align*}
\int_{B_{\zeta r}} |\nabla u|^p\, \dd x\leq c\int_{B_r\backslash B_{\zeta r}} \left( |\nabla u|^p + \frac{|u-a|^p}{r^p(1-\zeta)^p} \right) \,\dd x + cr^n.
\end{align*}
We now use Wydman's hole-filling strategy to obtain that, for some $\tilde{\theta}\in (0,1)$, with $\tilde{\theta}$ depending exclusively on $m, L$ and $p$, the following holds:
\begin{align*}
\int_{B_{\zeta r}} |\nabla u|^p\, \dd x\leq \tilde{\theta} \int_{B_r } \left( |\nabla u|^p + \frac{|u-a|^p}{r^p(1-\zeta)^p} \right) \,\dd x + \tilde{\theta}r^n.
\end{align*}
Therefore, if we fix $\tilde{\theta}<\zeta^n$, for $\theta:=\frac{\tilde{\theta}}{\zeta^n}\in(0,1)$ we have that

\begin{align*}
\dashint_{B_{\zeta r}} |\nabla u|^p\, \dd x\leq {\theta} \,\dashint_{B_r } \left( |\nabla u|^p + \frac{|u-a|^p}{r^p(1-\zeta)^p} \right) \,\dd x + {\theta}.
\end{align*}
We now apply Poincar{\'e}-Sobolev inequality to the second term on the right hand side  and  use that $|z_0|\leq m$ to obtain that 
\begin{align}\label{eq:preGehringPrelHighInt}
\dashint_{B_{\zeta r}} |\nabla u|^p\, \dd x\leq {\theta} \,\dashint_{B_r } |\nabla u|^p\,\dd x  + \frac{c({\theta},n)}{(1-\zeta)^p} \,\left( \dashint_{B_r }|\nabla u|^{\frac{pn}{n+p}} \,\dd x \right)^{\frac{n+p}{n}}+ c(m,\theta).
\end{align}
We note that this will all hold as long as $\|\nabla\varphi\|_{\LL^q(\Omega,\R^{N\times n})}<\delta$. In order to achieve this, we use Lemma \ref{remark:varphi} (with $R=2R_0$) and the fact that $u\in\WW^{1,q}_{\mathrm{loc}}(\Omega,\R^N)$ to conclude that, for our fixed choice of $\zeta=\zeta(m,n,L)$, we can find $R_\delta=R_\delta(m,n,N,q,L)\in(0,1)$ such that,  if $q\in [1,\infty]$, then for all $0<R_0<R_\delta$, if $B_r\subseteq B(x_0,R_0)$, then $\|\nabla\varphi\|_{\LL^q(\Omega,\R^{N\times n})}<\delta$.

We remark that, for the case $q=\infty$, we are using here that $z_0=(\nabla u)_{x_0,2R_0}$ and that \eqref{eq:q=infty} holds in order to use the estimate given by  Lemma \ref{remark:varphi}.

Whereby, considering that inequality \eqref{eq:preGehringPrelHighInt} holds for every $B_r\subseteq B(x_0,R_0)$, we can finally apply Gehring's Lemma in the version of Theorem \ref{theo:Gehring} and obtain that there exist $\varepsilon_0=\varepsilon_0(m,n,\theta,p)>0$ and $c=c(m,n,\theta,p)>0$ such that, for $q_0\in [p,p+\varepsilon_0)$, 
\begin{align*}
\left( \dashint_{B_{\frac{R_0}{2}}} |\nabla u|^{q_0}\, \dd x \right)^{\frac{1}{q_0}}\leq  c \,\left( \dashint_{B_{R_0} }|\nabla u|^{p } \,\dd x \right)^{\frac{1}{p}}+ c.
\end{align*}
This concludes the proof of the Lemma. 
\end{proof}
\subsection{A Caccioppoli inequality of the first kind}\label{Sec:Caccioppoli}
We are ready to establish a Caccioppoli inequality that, combined with the higher integrability from Lemma \ref{PrelHInt}, will pave the way to finally obtain the decay in the mean oscillations that we pursue. For the results in this section we also assume that $F$ satisfies the conditions \ref{H0}-\ref{H4:HoldCont} for some $p\geq 2$.

The Caccioppoli inequality will fundamentally rely on the local minimality property satisfied by $u$. For this reason, and considering the comments made in Lemma \ref{remark:varphi}, inequality \eqref{Caccioppoli_Statement} will be valid for a fixed $\zeta\in(0,1)$, that will depend on other fixed parameters. This implies that, just as in \cite{KT}, and opposite to what happens in the case of global minimizers, as in \cite{Eva86,GiaqGius}, we cannot iterate \eqref{Caccioppoli_Statement} in order to obtain a Caccioppoli inequality of the first kind. 

As discussed in the introduction, Kristensen and Taheri overcame this problem in the case of homogeneous integrands by means of a measure theoretical compactness argument for the blown-up sequence. However, given the need of a direct argument that is brought by the dependence on $u$ of $F$, we will instead make use of Theorem \ref{theo:Gehring} to derive from \eqref{Caccioppoli_Statement} a reverse H\"older inequality in Theorem \ref{theo:ReverseHolder}.
\begin{theorem}\label{theo:Caccioppoli}
Let $q\in[1,\infty]$ and let $u\in\WW^{1,p}(\Omega,\R^N)\cap \WW^{1,q}_{\mathrm{loc}}(\Omega,\R^N)$ be a $\WW^{1,q}$-local minimizer of $\mathcal{F}$. If $q=\infty$, assume further that \eqref{eq:q=infty} holds for some $\delta_1\in(0,\delta_0)$, with $\delta_0>0$ given by Lemma \ref{remark:varphi}. 

Then, for every $m>0$ there exist $R_\delta=R_\delta(n,N,q,L,m)\in(0,1)$, as well as $\zeta,\theta\in(0,1)$, with $\zeta=\zeta(m,n,L,F'')$ and $\theta=\theta(m,n,L,F'')$, such that, if $R_0\in(0,R_\delta)$, $B(x_0,2R_0)\subseteq\Omega$,  $z_0:=(\nabla u)_{x_0,2R_0}$  and $u_0\in\R^{N}$ satisfy $|u_0|+|z_0|\leq m$, then  for every  $B(y_0,r)\subseteq B(x_0,{R_0})$ and for $a\colon\R^n\to\R^N$ the affine map given by $a(x):=z_0\cdot(x-y_0)+(u)_{y_0,r}$,  we have that
\begin{align}
&  \dashint_{B_{\zeta r}} |V(\nabla u -z_0)|^2 \,\dd x \notag \\
\leq &\, {\theta}\, \dashint_{B_r }  |V( \nabla u - z_0) |^2  \, \dd  x  + {\theta}\, \dashint_{B_r } \left | V\left(\frac{u-a }{r(1-\zeta)} \right) \right|^2 \, \dd x \label{Caccioppoli_Statement} \\
& + {\theta}\, \dashint_{B_r}  \vartheta( |u_0|, 2 |x-y_0|^2 + 2 |u-u_0|^2  ) \left(1 + |\nabla u|^p + |z_0|^p\right)\, \dd x \notag \\
& + {\theta}\, \dashint_{B_r }  \vartheta( |u_0|, 2|u-a|^2  ) \left(1 + |z_0|^p \right)  \, \dd x.\notag 
\end{align}
\end{theorem}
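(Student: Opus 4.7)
The plan is to follow the direct Giaquinta--Modica freezing strategy, constructing a test variation which is both supported on the cut-off set and small in $\WW^{1,q}$ so that the local minimality of $u$ can legitimately be invoked. First I would choose a standard cut-off $\rho$ with $\mathbbm{1}_{B_{\zeta r}}\leq\rho\leq\mathbbm{1}_{B_r}$ and $|\nabla\rho|\leq 1/(r(1-\zeta))$, split the deviation $u-a=\varphi+\psi$ with $\varphi:=\rho(u-a)$ and $\psi:=(1-\rho)(u-a)$, and record the identities $z_0+\nabla\varphi=\nabla u-\nabla\psi$ and $\nabla u-\nabla\varphi=z_0+\nabla\psi$. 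Thanks to Remark~\ref{remark:varphi}, the hypothesis $u\in\WW^{1,q}(\Omega,\R^N)$, and (when $q=\infty$) condition \eqref{eq:q=infty} applied on the enclosing ball $B(x_0,2R_0)\supseteq B(y_0,r)$ with the pivotal choice $z_0=(\nabla u)_{x_0,2R_0}$, one can pick $R_\delta(m,n,N,q,\ell,L)>0$ small enough that $\|\nabla\varphi\|_{\LL^q}<\delta$ for every $r<R_0<R_\delta$, so that $\varphi$ is admissible as a local-minimality variation.

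Applying strong quasiconvexity \ref{H2:QC} at the frozen base point $(y_0,u_0,z_0)$ to $\varphi$ gives
\[
\ell\int_{B_r}|V(\nabla\varphi)|^2\,\dd x \leq \int_{B_r}\bigl(F(y_0,u_0,z_0+\nabla\varphi)-F(y_0,u_0,z_0)\bigr)\,\dd x,
\]
and I would then unfold the right-hand side into the true nonlinear integrand via the telescoping sum
\begin{align*}
F(y_0,u_0,z_0+\nabla\varphi)-F(y_0,u_0,z_0)
&= \bigl[F(y_0,u_0,\nabla u-\nabla\psi)-F(x,u,\nabla u-\nabla\psi)\bigr]\\
&\quad + \bigl[F(x,u,\nabla u-\nabla\psi)-F(x,u,\nabla u)\bigr]\\
&\quad + \bigl[F(x,u,\nabla u)-F(x,u-\varphi,\nabla u-\nabla\varphi)\bigr]\\
&\quad + \bigl[F(x,u-\varphi,\nabla u-\nabla\varphi)-F(y_0,u_0,\nabla u-\nabla\varphi)\bigr]\\
&\quad + \bigl[F(y_0,u_0,z_0+\nabla\psi)-F(y_0,u_0,z_0)\bigr].
\end{align*}
The third bracket, integrated over $B_r$, is nonpositive by the local minimality of $u$ against the admissible competitor $u-\varphi=\psi+a$, and may be discarded.

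Next I would estimate the remaining four brackets. The two with matching lower-order variables (terms two and five) are controlled by the Lipschitz bound \eqref{LipFz}: both have integrands supported on $B_r\setminus B_{\zeta r}$, involving $|\nabla\psi|\leq|\nabla u-z_0|+|u-a|/(r(1-\zeta))$ times a factor $(1+|\nabla u|^{p-1}+|z_0|^{p-1})$, and Young's inequality then splits them into an $\varepsilon$-multiple of $\int_{B_r\setminus B_{\zeta r}}|V(\nabla u-z_0)|^2\,\dd x$ plus a $C(\varepsilon)$-multiple of $\int_{B_r\setminus B_{\zeta r}}|V((u-a)/(r(1-\zeta)))|^2\,\dd x$. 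The two mismatched brackets (terms one and four) are handled by \ref{H4:HoldCont}: pointwise bounds $|\nabla u-\nabla\psi|^p\leq c(1+|\nabla u|^p+|z_0|^p+|u-a|^p/(r(1-\zeta))^p)$ and $|\nabla u-\nabla\varphi|^p=|z_0+\nabla\psi|^p\leq c(1+|z_0|^p)+\cdots$, together with $|u-\varphi-u_0|^2\leq 2|u-a|^2+2|a-u_0|^2$ (the second summand being absorbed into the $|u-u_0|^2$-term via monotonicity of $\vartheta$), produce precisely the two $\vartheta$-terms in \eqref{Caccioppoli_Statement}.

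To conclude I would carry out Widman's hole-filling. Since $\nabla\varphi=\nabla u-z_0$ on $B_{\zeta r}$, the left-hand side already dominates $\ell\int_{B_{\zeta r}}|V(\nabla u-z_0)|^2\,\dd x$, and adding a suitable multiple of this integral to both sides of the chain yields
\[
\int_{B_{\zeta r}}|V(\nabla u-z_0)|^2\,\dd x \leq \tilde{\theta}\int_{B_r}|V(\nabla u-z_0)|^2\,\dd x+(\text{other RHS terms})
\]
with $\tilde{\theta}=\tilde{\theta}(m,n,\ell,L)\in(0,1)$. Fixing $\zeta$ close enough to $1$ that $\tilde{\theta}<\zeta^n$ and normalizing by $|B_r|$ produces \eqref{Caccioppoli_Statement} with $\theta:=\tilde{\theta}/\zeta^n\in(0,1)$. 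The main obstacle, and the essential difference from the global-minimizer setting, is precisely the admissibility constraint $\|\nabla\varphi\|_{\LL^q}<\delta$: it forces $\zeta$ to be fixed quantitatively, bounded away from $1$, so one cannot iterate to wash out the remaining $\theta\,\dashint_{B_r}|V(\nabla u-z_0)|^2$ term. This is exactly why only a Caccioppoli of the \emph{second} kind is available here, and why Stredulinsky's generalization of Gehring's lemma will later be needed to extract a reverse Hölder inequality.
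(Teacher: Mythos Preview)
Your overall architecture --- cut-off, splitting $u-a=\varphi+\psi$, quasiconvexity at the frozen point, a telescoping decomposition, local minimality to kill one bracket, then \ref{H4:HoldCont} for the mismatched brackets and hole-filling --- is the right one, and your telescope is algebraically correct. The problem is in your treatment of brackets two and five via the Lipschitz bound \eqref{LipFz}. That estimate reads
\[
|F(x,u,z)-F(x,u,w)|\leq c\,(1+|z|^{p-1}+|w|^{p-1})\,|z-w|,
\]
and the constant ``$1$'' in the prefactor is fatal here: after inserting $|z_0|\leq m$ you are left with a contribution of order $c(m)\int_{B_r\setminus B_{\zeta r}}|\nabla\psi|\,\dd x$. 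A first-order term like this cannot be absorbed into $\int|V(\nabla u-z_0)|^2$ or $\int|V((u-a)/r(1-\zeta))|^2$ alone; Young's inequality on $|\nabla\psi|\leq\tfrac12(1+|\nabla\psi|^2)$ unavoidably spits out a bare $c(m)\,|B_r\setminus B_{\zeta r}|$. After hole-filling and averaging this survives as a fixed positive constant on the right-hand side, which is \emph{not} present in \eqref{Caccioppoli_Statement} and would wreck the subsequent Gehring/decay steps (all the legitimate right-hand terms either carry a factor $\vartheta$ or are quadratic in the oscillation).

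The paper avoids this by working from the outset with the shifted frozen integrand
\[
\bar F(z):=F(y_0,u_0,z_0+z)-F(y_0,u_0,z_0)-F_z(y_0,u_0,z_0)[z],
\]
which vanishes to second order at $0$. Lemma~\ref{lemma:FrozenShift} then gives the refined bound $|\bar F(z)-\bar F(w)|\leq c\,(|z|+|w|+|z|^{p-1}+|w|^{p-1})|z-w|$ with \emph{no} constant prefactor, so the analogue of your brackets two and five produces only $\int_{B_r\setminus B_{\zeta r}}|V(\nabla u-z_0)|^2+|V(\nabla\psi)|^2$. The extra linear pieces $F_z(y_0,u_0,z_0)[\,\cdot\,]$ generated by this subtraction cancel exactly: one obtains $\int F_z[\nabla\psi]-\int F_z[\nabla\varphi+\nabla\psi]=-\int F_z[\nabla\varphi]=0$ because $\varphi\in\WW^{1,p}_0(B_r)$. (Note that $\int_{B_r}F_z[\nabla u-z_0]$ itself is \emph{not} zero here, since $z_0=(\nabla u)_{x_0,2R_0}$ is not the average over $B_r$; it is only the combination with the $\nabla\psi$ term that vanishes.) Your telescope can be repaired by making this subtraction explicit, but as written the Young step you describe does not deliver what you claim.
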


\begin{proof}[Proof of Theorem \ref{theo:Caccioppoli}]
Let $m>0$ be arbitrary and fix $(x_0,u_0,z_0)\in\Omega\times\R^N\times\R^{N\times n}$, with $|u_0|+|z_0|\leq m$. Take $B_r:=B(y_0,r)\subseteq B(x_0,{R_0})=B_{R_0}$, with $R_0$ to be determined. 

Furthermore, let $\zeta\in (0,1)$ and assume that $\rho$ is a cut-off function satisfying that
\[
\mathbbm{1}_{B_{\zeta r}}\leq \rho\leq \mathbbm{1}_{B_r} \,\,\,\mbox{ and }\,\,\,|\nabla\rho|\leq \frac{1}{r(1-\zeta)}. 
\]
Define $a(x):=z_0\cdot(x-y_0)+(u)_{y_0,r}$, 
\[
\varphi:=\rho(u-a), \hspace{0.5cm}\mbox{ and }\hspace{0.5cm} \psi:=(1-\rho)(u-a).
\]
In addition, let $\bar{F}(z):=F(y_0,u_0,z_0+z)-F(y_0,u_0,z_0)-F_z(y_0,u_0,z_0)[z]$. 
\\
Then, by first using \ref{H2:QC} and then Lemma \ref{lemma:FrozenShift}, we obtain the following:
\begin{align*}
  \int_{B_r} |V(\nabla\varphi)|^2 \,\dd x 
\leq & \int_{B_r} \bar{F}(\nabla\varphi)\,\dd x 
=  \int_{B_r} \bar{F}(\nabla u - z_0 - \nabla\psi)\,\dd x\\
= & \int_{B_r} \bar{F}(\nabla u-z_0)\,\dd x + \int_{B_r} \left( \bar{F}(\nabla u-z_0-\nabla\psi) - \bar{F}(\nabla u-z_0) \right)\,\dd x\\
\leq & \int_{B_r} \bar{F}(\nabla u-z_0)\,\dd x  \\
& +c\int_{B_r}\left( |\nabla\psi|+|\nabla u - z_0|+|\nabla \psi|^{p-1}+|\nabla u - z_0|^{p-1} \right)|\nabla\psi|\,\dd x.  
\end{align*}
For the first term on the right hand side above, we note that
\begin{align*}
& \int_{B_r}\bar{F}(\nabla u - z_0)\,\dd x \\
= & \int_{B_r}\big( F(y_0,u_0,\nabla u)-F(y_0,u_0,z_0)-F_z(y_0,u_0,z_0)[\nabla u - z_0] \big)\,\dd x\\
= & \int_{B_r}F(x,u,\nabla u)\,\dd x +\int_{B_r}  \big( F(y_0,u_0,\nabla u)- F(x,u,\nabla u) \big) \,\dd x\\
& -\int_{B_r} \big( F(y_0,u_0,z_0)+F_z(y_0,u_0,z_0)[\nabla u - z_0] \big) \,\dd  x.
\end{align*}
On the other hand, if $\|\nabla\varphi\|_{\LL^q}<\delta$, then the local minimality property  of $u$ implies that
\begin{align*}
 \int_{B_r} F(x,u,\nabla u) \,\dd x 
\leq & \int_{B_r} F(x,u-\varphi, \nabla u -\nabla\varphi)\,\dd x\\
 = & \int_{B_r}F(y_0,u_0,\nabla\psi+z_0)\,\dd x\\
& + \int_{B_r} \left(F(x,\psi+ a ,\nabla\psi+z_0) - F(y_0,u_0,\nabla\psi + z_0) \right)\,\dd x\\
= & \int_{B_r}\bar{F}(\nabla\psi)\,\dd x + \int_{B_r}F(y_0,u_0,z_0)\,\dd x + \int_{B_r}F_z(y_0,u_0,z_0)[\nabla\psi]\,\dd x\\
& + \int_{B_r}\left( F(x,\psi+ a ,\nabla\psi+z_0) - F(y_0,u_0,\nabla\psi+z_0)   \right) \,\dd x.
\end{align*}
Compiling all the estimates above we obtain that
\begin{align}
&  \int_{B_r} |V(\nabla\varphi)|^2\,\dd x \notag \\
\leq & \int_{B_r} \bar{F}(\nabla \psi )\,\dd x + c\int_{B_r}\left(|\nabla\psi|+|\nabla u - z_0|+|\nabla \psi|^{p-1}+|\nabla u - z_0|^{p-1} \right)|\nabla\psi|\,\dd x \notag \\
& + \int_{B_r} \left(F(y_0,u_0,\nabla u)-F(x,u,\nabla u)     \right) \,\dd x \notag  \\
& + \int_{B_r}F_z(y_0,u_0,z_0)[\nabla\psi]\,\dd x - \int_{B_r}F_z(y_0,u_0,z_0)[\nabla\varphi+\nabla\psi]\,\dd x \notag \\ 
& + \int_{B_r}\big( F(x,\psi+ a ,\nabla\psi+z_0)-F(y_0,u_0,\nabla\psi + z_0)  \big) \,\dd x.\notag
\end{align}
Using Lemma \ref{lemma:FrozenShift} for the first term and Young's inequality for the second term on the right hand side, together with $\varphi\in\WW^{1,p}_0(B_r,\R^N)$ and the definition of $V$, this implies that, for $\zeta\in(0,1)$, 
\begin{align}
 \int_{B_{\zeta r}} |V(\nabla u -z_0)|^2 \,\dd x 
\leq &\, c\int_{B_r\backslash B_{\zeta r}} |V(\nabla\psi)|^2  \, \dd x \notag \\
& + c\int_{B_r\backslash B_{\zeta r}}  |V(\nabla u - z_0)|^2   \, \dd x\notag \\
& + \int_{B_r} \left(F(y_0,u_0,\nabla u)-F(x,u,\nabla u)     \right) \,\dd x \label{MinForCacc}  \\
& + \int_{B_r}\big( F(x,\psi+ a ,\nabla\psi+z_0)-F(y_0,u_0,\nabla\psi + z_0)  \big) \,\dd x.\notag
\end{align}
Now we use assumption \ref{H4:HoldCont} and the fact that $\varphi\in\WW^{1,p}_0(B_r,\R^N)$ to obtain from above that
\begin{align*}
&  \int_{B_{\zeta r}} |V(\nabla u -z_0)|^2 \,\dd x \notag\\
\leq &\, c\int_{B_r\backslash B_{\zeta r}} |V(\nabla\psi)|^2  \, \dd x + c\int_{B_r\backslash B_{\zeta r}}  |V(\nabla u - z_0)|^2   \, \dd x\\
& + c\int_{B_r}  \vartheta( |u_0|, |x-y_0|^2 + |u-u_0|^2  ) \left(1 + |\nabla u|^p \right)\, \dd x\\
& + c\int_{B_r}  \vartheta( |u_0|, |x-y_0|^2 + |u-u_0|^2 + |u- a |^2 ) \left(1 + |\nabla \psi|^p + |z_0|^p \right)\, \dd x.
\end{align*}
Since $\vartheta\leq 2 L$, $\psi=0$ in $B_{\zeta r}$, and $|\cdot|^p\leq |V|^2$,  this implies that
\begin{align*}
&  \int_{B_{\zeta r}} |V(\nabla u -z_0)|^2 \,\dd x \\
\leq &\, c\int_{B_r\backslash B_{\zeta r}} |V(\nabla\psi)|^2  \, \dd x
 + c\int_{B_r\backslash B_{\zeta r}} |V( \nabla u - z_0) |^2 \, \dd x\\
& + c\int_{B_r}  \vartheta( |u_0|, |x-y_0|^2 + |u-u_0|^2  ) \left(1 + |\nabla u|^p  \right)\, \dd x\\
& + c\int_{B_r}  \vartheta( |u_0|, |x-y_0|^2 + |u-u_0|^2 + |u-a|^2 ) \left(1 + |z_0|^p \right)\, \dd x.
\end{align*}
Note that we have made a hole for all the terms where $|\nabla\psi|$ appears as a factor. We emphasize that performing this step is crucial to ensure that we will later be able to suitably apply the higher integrability from Lemma \ref{PrelHInt}.
Observe here that, since $\vartheta$ is increasing and non-negative, then for all $s>0$ and for all $a,b>0$, 
\begin{equation}\label{eq:vartheta-subadit}
\vartheta(s,a+b)\leq \vartheta(s,2a)+\vartheta(s,2b).
\end{equation}
We now use the definition of $\psi$ in the inequality above, \eqref{eq:vartheta-subadit} and that $\vartheta$ is increasing, to obtain that
\begin{align*}
&  \int_{B_{\zeta r}} |V(\nabla u -z_0)|^2  \,\dd x \\
\leq &\, c\int_{B_r\backslash B_{\zeta r}}  |V(\nabla u - z_0)|^2 \, \dd x 
 + c\int_{B_r} \left|V\left(\frac{u-a}{r(1-\zeta)}\right)\right|^2  \, \dd x\\
& + c\int_{B_r}  \vartheta( |u_0|, 2 |x-y_0|^2 + 2 |u-u_0|^2  ) \left(1 + |\nabla u|^p + |z_0|^p\right)\, \dd x\\
& + c\int_{B_r }  \vartheta( |u_0|, 2|u-a|^2  ) \left(1 + |z_0|^p \right)  \, \dd x.
\end{align*}
We fill in the hole and conclude that, for some $\tilde{\theta}\in(0,1)$, 
\begin{align*}
& \int_{B_{\zeta r}} |V(\nabla u -z_0)|^2  \,\dd x \\
\leq &\,  \tilde{\theta}\int_{B_r }  |V(\nabla u - z_0)|^2  \, \dd x 
  +\tilde{\theta} \int_{B_r } \left| V\left(\frac{u-a}{r(1-\zeta)} \right) \right|^2 \, \dd x\\
& + \tilde{\theta}\int_{B_r}  \vartheta( |u_0|, 2 |x-y_0|^2 + 2 |u-u_0|^2  ) \left(1 + |\nabla u|^p + |z_0|^p\right)\, \dd x\\
& + \tilde{\theta}\int_{B_r }  \vartheta( |u_0|, 2|u-a|^2  ) \left(1 + |z_0|^p \right)  \, \dd x.
\end{align*}
Henceforth, if we assume that $\tilde{\theta}<\zeta^n$, this implies that, for $\theta =\tilde{\theta}/\zeta^n\in (0,1)$, 
\begin{align*}
 & \dashint_{B_{\zeta r}} |V(\nabla u -z_0)|^2  \,\dd x \\
\leq &\,  {\theta}\,\dashint_{B_r }  |V(\nabla u - z_0)|^2  \, \dd x\\
& +{\theta} \,\dashint_{B_r } \left| V\left(\frac{u-a}{r(1-\zeta)} \right) \right|^2 \, \dd x\\
& + {\theta}\,\dashint_{B_r}  \vartheta( |u_0|, 2 |x-y_0|^2 + 2 |u-u_0|^2  ) \left(1 + |\nabla u|^p + |z_0|^p\right)\, \dd x\\
& + {\theta}\,\dashint_{B_r }  \vartheta( |u_0|, 2|u-a|^2  ) \left(1 + |z_0|^p \right)  \, \dd x.
\end{align*}
We recall that this will hold provided we can use the $\WW^{1,q}$-local minimality of $u$ to obtain \eqref{MinForCacc}, meaning that we need $\|\nabla\varphi  \|_{\LL^q(\Omega,\R^{N\times n})}<\delta$. By Lemma \ref{remark:varphi} and the fact that $u\in\WW^{1,q}_{\mathrm{loc}}(\Omega,\R^N)$, as well as the assumptions that $|z_0|\leq m$ and \eqref{eq:q=infty}, we can argue as at the end of Lemma \ref{PrelHInt} and, whereby, find $R_\delta=R_\delta(n,N,q,L,m)\in(0,1)$ such that, for every $r\in (0,R_\delta)$, $\|\nabla\varphi\|_{\LL^q(\Omega,\R^{N\times n})}<\delta$ and, hence, \eqref{Caccioppoli_Statement} holds. 
\end{proof}
\subsection{A reverse H\"older inequality}\label{Sec:RevHolder}
We will now use the previous Caccioppoli inequality to obtain a  reverse H\"{o}lder inequality.  It is important to note at this stage that, in order to do so, we need to obtain an expression where the term $$\theta\dashint_{B_r}\! |V(\nabla u-z_0)|^2\,\dd x$$ is improved into an estimate with a higher exponent. It is for this purpose that we wish to apply Theorem \ref{theo:Gehring}. Once again, for this section we assume that $F$ satisfies the conditions \ref{H0}-\ref{H4:HoldCont} for some $p\geq 2$.

On the other hand, we note that in \eqref{Caccioppoli_Statement}, we also have the lower order term $u-a$ on the right hand side. This is already a higher integrable term, by Poincar\'e-Sobolev compactness theorem. However, in order to take advantage of this, we have made $a$ dependent on $r>0$ in the statement of Theorem \ref{theo:Caccioppoli}. It is therefore crucial that both appearances of $u-a$ have been made so that they are only being multiplied by constants and, whereby, we can readily apply Poincar\'e-Sobolev inequality. This allows us to obtain an estimate where the integrands do not depend anymore on $r>0$ and, finally, apply Gehring's Lemma in its generalized version of Theorem \ref{theo:Gehring}. 
We proceed to implement these ideas in order to obtain the following result.

\begin{theorem}[A reverse H\"older inequality]\label{theo:ReverseHolder}
Let $q\in[1,\infty]$ and let  $u\in\WW^{1,p}(\Omega,\R^N)\cap \WW^{1,q}_{\mathrm{loc}}(\Omega,\R^N)$ be a $\WW^{1,q}$-local minimizer of $\mathcal{F}$. Furthermore, if $q=\infty$, assume that \eqref{eq:q=infty} holds for some $\delta_1>0$ taken as in Lemma \ref{remark:varphi}. Then,  for every $m>0$ there exist $R_\delta=R_\delta(n,N,q,L,m)\in (0,1)$, a constant  $c=c(m,n,L,F'')>0$ and  $q_1>2$,  such that, if $2 R_0<R_\delta$, $B_{2 R_0}=B(x_0,2R_0)\subseteq \Omega$ is a ball,  and  $|(u)_{2R_0}|+|(\nabla u)_{2R_0}|\leq m$, then we have that, for $z_0=(\nabla u)_{2R_0}$, 
\begin{align}
& \left( \dashint_{B_{\frac{R_0}{2}}} |V(\nabla u -z_0)|^{{q_1}} \,\dd x \right)^{\frac{2}{q_1}} \notag \\
\leq & \, c\,\dashint_{B_{2 R_0} }  |V( \nabla u - z_0) |^2  \, \dd x \label{eq:RevHold} \\
& + c\, \vartheta_0 \left( |(u)_{x_0,2R_0}|,  R_0^2 + R_0^2\,\dashint_{B_{2 R_0}}  |\nabla u - z_0|^2  \,\dd x \right)
  + c  \,R_0^{2\beta}. \notag
\end{align}
Here, $\vartheta_0(u,t):=\vartheta^{\kappa}(u,ct)$ for a constant $c=c(m,n)>0$ and an exponent $\kappa=\kappa(n,p,\theta,m)\in(0,1)$, with $\theta$ as in Theorem \ref{theo:Caccioppoli}.  
\end{theorem}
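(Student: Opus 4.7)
The plan is to feed the Caccioppoli inequality of Theorem~\ref{theo:Caccioppoli} into the generalized Gehring lemma (Theorem~\ref{theo:Gehring}), using the preliminary higher integrability Lemma~\ref{PrelHInt} along the way to neutralize a coupled term. I will apply the Caccioppoli on every ball $B_r=B(y_0,r)\subseteq B_{R_0}:=B(x_0,R_0)$ with the canonical choice $u_0:=(u)_{x_0,2R_0}$ and $z_0:=(\nabla u)_{x_0,2R_0}$; the hypothesis $|u_0|+|z_0|\le m$ guarantees that the constants $\zeta,\theta\in(0,1)$ and $R_\delta>0$ furnished by Theorem~\ref{theo:Caccioppoli} can be taken uniformly in $(y_0,r)$. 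This produces four terms on the right-hand side: a self-referential $\LL^2$-average of $|V(\nabla u - z_0)|^2$, a Poincar\'e-type term built from the affine map $a(x)=z_0\cdot(x-y_0)+(u)_{y_0,r}$, and two $\vartheta$-integrals.

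The first reduction is to lower the exponent in the Poincar\'e term. Since $\nabla a\equiv z_0$ and $(u-a)_{y_0,r}=0$, the two-exponent Poincar\'e--Sobolev inequality applied separately to the $|\cdot|^2$ and $|\cdot|^p$ pieces of $|V|^2$ will yield, for some fixed $s=s(n,p)>1$,
\[
\dashint_{B_r}\left|V\!\left(\tfrac{u-a}{r(1-\zeta)}\right)\right|^2\dd x\le c\left(\dashint_{B_r}|V(\nabla u - z_0)|^{2/s}\,\dd x\right)^{s},
\]
which is exactly the sub-linear structure that Gehring will later improve. Next I will estimate the two $\vartheta$-integrals. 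Using $|u_0|,|z_0|\le m$, the factor $(1+|z_0|^p)$ is absorbed into a constant. The genuinely delicate piece is the coupling $\vartheta(|u_0|,\cdot)\,|\nabla u|^p$ in the first $\vartheta$-integral: I will split it by H\"older's inequality with conjugate exponents $(\sigma,\sigma')$ chosen so that $p\sigma'\in[p,p+\varepsilon_0)$ lies in the higher-integrability range provided by Lemma~\ref{PrelHInt}. Lemma~\ref{PrelHInt}, applied on $B_{R_0}\subseteq B_{2R_0}$ (this is precisely why the hypotheses $2R_0<R_\delta$ and $|(\nabla u)_{x_0,2R_0}|\le m$ are imposed), then controls the $|\nabla u|^{p\sigma'}$-factor; the surviving $\vartheta^\sigma$-factor is handled via $\vartheta(|u_0|,t)\le\rho(m)t^\beta$, Jensen's inequality, and a Poincar\'e shift of mean from $(u)_{y_0,r}$ to $u_0=(u)_{x_0,2R_0}$, which converts $\dashint_{B_r}|u-u_0|^2$ into $R_0^2+R_0^2\dashint_{B_{2R_0}}|\nabla u - z_0|^2$, exactly as it appears in the statement. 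The second $\vartheta$-integral is treated in the same spirit and more easily, as the factor $|\nabla u|^p$ is absent.

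After these manipulations the Caccioppoli inequality will read, uniformly in $B_r\subseteq B_{R_0}$,
\[
\dashint_{B_{\zeta r}}|V(\nabla u-z_0)|^2\le\theta\dashint_{B_r}|V(\nabla u-z_0)|^2+K\left(\dashint_{B_r}|V(\nabla u-z_0)|^{2/s}\right)^{s}+M,
\]
where $M$ is independent of $(y_0,r)$ and has the desired form $c\,\vartheta_0\bigl(|u_0|,R_0^2+R_0^2\dashint_{B_{2R_0}}|\nabla u - z_0|^2\bigr)+c\,R_0^{2\beta}$. Setting $g:=|V(\nabla u - z_0)|^{2/s}$ and taking $f$ to be the constant function $M^{1/s}$ on $\mathbb{B}:=B_{R_0}$ identifies this with the hypothesis of Theorem~\ref{theo:Gehring} at exponent $s>1$. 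The conclusion gives $g\in\LL^{t}(B_{R_0/2})$ for some $t>s$ close to $s$; raising the resulting estimate to the power $s$, undoing the substitution to produce $q_1:=2t/s>2$, and enlarging the domain on the right-hand side from $B_{R_0}$ to $B_{2R_0}$ at the cost of a factor $2^n$ will deliver~\eqref{eq:RevHold}.

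The main obstacle I anticipate is precisely the coupled factor $|\nabla u|^p$ inside the first $\vartheta$-integral: it cannot be absorbed by quasiconvexity alone, and the crude bound $\vartheta\le 2L$ would destroy the $R_0^{2\beta}$ scaling required by the statement. Lemma~\ref{PrelHInt} is exactly the tool that permits H\"older's inequality to decouple $\vartheta$ from $|\nabla u|^p$ while keeping the estimate quantitative; once this hurdle is cleared, the Poincar\'e--Sobolev step, the treatment of the second $\vartheta$-integral, and the Gehring bootstrap are, by comparison, essentially bookkeeping.
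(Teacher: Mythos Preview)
Your overall architecture is sound --- Caccioppoli followed by Poincar\'e--Sobolev followed by Gehring is exactly the route the paper takes --- but there is a genuine gap in the order of operations you propose for the coupled term $\vartheta(|u_0|,\cdot)\,|\nabla u|^p$. You plan to decouple it via H\"older on each sub-ball $B_r=B(y_0,r)$ and then invoke Lemma~\ref{PrelHInt} to bound the resulting factor $\bigl(\dashint_{B_r}|\nabla u|^{p\sigma'}\bigr)^{1/\sigma'}$, arriving at a number $M$ independent of $(y_0,r)$. This cannot work: Lemma~\ref{PrelHInt} yields higher integrability only on the \emph{fixed} ball $B_{R_0/2}$, not a uniform bound on local averages over all sub-balls $B_r\subseteq B_{R_0}$, and those local averages are not uniformly controlled in $r$ (enlarging $B_r$ to $B_{R_0}$ costs a factor $(R_0/r)^n$). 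The same obstruction defeats your ``Poincar\'e shift of mean'': since $u_0=(u)_{x_0,2R_0}$ rather than $(u)_{y_0,r}$, converting $\dashint_{B_r}|u-u_0|^2$ into $R_0^2+R_0^2\dashint_{B_{2R_0}}|\nabla u-z_0|^2$ again incurs an unavoidable $(R_0/r)^n$. Hence the quantity you call $M$ depends on $r$, and Gehring's hypothesis with $f\equiv M^{1/s}$ is not available.

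The fix, and what the paper does, is to reverse the order of Gehring and the decoupling. After replacing $|x-y_0|^2\le R_0^2$, the integrand $\vartheta(|u_0|,2R_0^2+2|u(x)-u_0|^2)(1+|\nabla u(x)|^p+|z_0|^p)$ is a fixed function of $x$ alone, independent of $(y_0,r)$; this is taken as the $|f|^p$-function in Theorem~\ref{theo:Gehring}. Gehring then returns $\bigl(\dashint_{B_{R_0/2}}|V(\nabla u-z_0)|^{q_1}\bigr)^{2/q_1}$ bounded by the $\LL^2$-average plus $\bigl(\dashint_{B_{R_0}}|f|^{q_1}\bigr)^{2/q_1}$. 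Only now, with the integral sitting on a \emph{fixed} ball, is H\"older applied to split off $\bigl(\dashint_{B_{R_0}}|\nabla u|^{q_2}\bigr)^{p/q_2}$ with $q_2\in(pq_1/2,\,p+\varepsilon_0)$; Lemma~\ref{PrelHInt} then legitimately controls this factor by $\dashint_{B_{2R_0}}(1+|\nabla u|^p)$, and the residual $\vartheta$-factor is handled by concavity and Poincar\'e on $B_{2R_0}$. Your treatment of the Poincar\'e--Sobolev step for the $u-a$ term and of the second $\vartheta$-integral is correct and matches the paper.
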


\begin{proof}
Let $m>0$, and let $R_\delta,\zeta,\theta\in (0,1)$ be as in Theorem \ref{theo:Caccioppoli}. Take $R_0\in (0,\frac{1}{2}R_\delta)$ and a ball $B_{2 R_0}=B(x_0,2R_0)\subseteq \Omega$. 

Define $u_0:=(u)_{2R_0}$, $z_0:=(\nabla u)_{2R_0}$, and assume that $(u_0,z_0)\in\R^N\times \R^{N\times n}$ are such that 
\[
|u_0|+|z_0|\leq m.
\]
Furthermore, let $B_r=B(y_0,r)\subseteq B_{R_0}$. 
\\
Now, use that $\vartheta$ is concave for the last term on the right hand side of \eqref{Caccioppoli_Statement}, and then Poincar{\'e}-Sobolev inequality for all the appearances of $u-a$, as well as the increasing nature of $\vartheta$. We henceforth obtain that 
\begin{align}
\dashint_{B_{\zeta r}} |V(\nabla u -z_0)|^2 \,\dd x \notag 
\leq &\, {\theta}\, \dashint_{B_r }  |V( \nabla u - z_0) |^2  \, \dd x \notag \\
& + c \, \left( \dashint_{B_r } \left | V\left(\nabla u - z_0 \right) \right|^\frac{2n}{n+2} \, \dd x\right)^{\frac{n+2}{n}} \label{Cacc1} \\
& + {\theta}\, \dashint_{B_r}  \vartheta( |u_0|, 2 |x-y_0|^2 + 2 |u-u_0|^2  ) \left(1 + |\nabla u|^p + |z_0|^p\right)\, \dd x\notag \\
& + {\theta} \left(1 + |z_0|^p \right)  \,\vartheta\left(|u_0|, 2cr^2 \left(\dashint_{B_r }   |\nabla u - z_0|^{\frac{2n}{n+2}}    \, \dd x \right)^{\frac{n+2}{n}}\right).\notag 
\end{align}
To estimate the last term on the right hand side, note that by the definition on $\vartheta$ and the fact that, for $t>0$, $(1+t)^\beta \leq (1+t)$, the assumption  that $|u_0|+|z_0|\leq m$ implies that
\begin{align}
&{\theta} \left(1 + |z_0|^p \right)  \,\vartheta\left(|u_0|, 2cr^2 \left(\dashint_{B_r }   |\nabla u - z_0|^{\frac{2n}{n+2}}    \, \dd x \right)^{\frac{n+2}{n}}\right)\notag \\
 \leq  & \,c(m,{\theta}) r^{2\beta} \left(\dashint_{B_r }   |\nabla u - z_0|^{\frac{2n}{n+2}}    \, \dd x \right)^{\frac{n+2}{n}\beta} \label{IVRH} \\
\leq &  \, c(m,{\theta}) r^{2\beta} \left(1+\left(\dashint_{B_r }   |\nabla u - z_0|^{\frac{2n}{n+2}}    \, \dd x \right)^{\frac{n+2}{n}}\right)^\beta  \notag \\
\leq & \,c(m, {\theta} )  R_0^{2\beta} \left(1+\left(\dashint_{B_r }   |V(\nabla u - z_0)|^{\frac{2n}{n+2}}    \, \dd x \right)^{\frac{n+2}{n}}\right).\notag 
\end{align}
From  \eqref{Cacc1},  \eqref{IVRH} and $B_r\subseteq B_{R_0}$, $R_0\in(0,1)$, we finally obtain the following estimate:
\begin{align}
&  \dashint_{B_{\zeta r}} |V(\nabla u -z_0)|^2 \,\dd x \notag \\
\leq &\, {\theta}\, \dashint_{B_r }  |V( \nabla u - z_0) |^2  \, \dd x 
 + c \, \left( \dashint_{B_r } \left | V\left(\nabla u - z_0 \right) \right|^\frac{2n}{n+2} \, \dd x\right)^{\frac{n+2}{n}} \notag \\
& + {\theta}\, \dashint_{B_r}  \vartheta( |u_0|, 2 R_0^2 + 2 |u-u_0|^2  ) \left(1 + |\nabla u|^p + |z_0|^p\right) \, \dd x + c \,R_0^{2\beta} .\notag
\end{align}
We can whereby apply Gehring's Lemma from Theorem \ref{theo:Gehring} to obtain that there exists $\varepsilon_1=\varepsilon_1(n,p,\theta,m)>0$ such that, for ${q}_1\in[2,2+\varepsilon_1)$ and a constant $c=c(n,\theta,m)$, 
\begin{align}
&  \left( \dashint_{B_{\frac{R_0}{2}}} |V(\nabla u -z_0)|^{{q_1}} \,\dd x \right)^{\frac{2}{q_1}} \notag \\
\leq &\, c\,\dashint_{B_{R_0} }  |V( \nabla u - z_0) |^2  \, \dd x  \notag \\
& + c\, \left( \dashint_{B_{R_0}}  \vartheta^{\frac{q_1}{2}}( |u_0|, 2 R_0^2 + 2 |u-u_0|^2  ) \left(1 + |\nabla u|^{\frac{pq_1}{2}} \right) \, \dd x \right)^{\frac{2}{q_1}} + c  \,R_0^{2\beta} \label{HintbeforeprelHint}\\
\leq & \, c\,\dashint_{B_{R_0} }  |V( \nabla u - z_0) |^2  \, \dd x  \notag  \\
& + c\, \left( \dashint_{B_{R_0}}  \vartheta( |u_0|, 2 R_0^2 + 2 |u-u_0|^2  ) \left(1 + |\nabla u|^{\frac{pq_1}{2}} \right) \, \dd x \right)^{\frac{2}{q_1}} + c  \,R_0^{2\beta} .\notag
\end{align}
For the last inequality we have used that, since $\vartheta\leq 2L$, then for $\eta>1$, $\vartheta^\eta\leq c(L,\eta)\vartheta$. 
\\
We now take $\varepsilon_0>0$ as the one given by Lemma \ref{PrelHInt} and fix $q_1\in (2,2+\varepsilon_1)$ such that  $\frac{pq_1}{2}\in (p,p+\varepsilon_0 )$. Then, we further choose $q_2\in \left( \frac{pq_1}{2},p+\varepsilon_0   \right)$.  

Using the exponent $q_3=\frac{2q_2}{pq_1}>1$, we now apply H\"{o}lder's inequality to the second term of \eqref{HintbeforeprelHint}. This, and $\vartheta^{\frac{q_3}{q_3-1}}\leq c(L,q_3)\vartheta$, lead to 
\begin{align}
&  \left( \dashint_{B_{\frac{R_0}{2}}} |V(\nabla u -z_0)|^{{q_1}} \,\dd x \right)^{\frac{2}{q_1}} \notag \\
\leq & \, c\,\dashint_{B_{R_0} }  |V( \nabla u - z_0) |^2  \, \dd x  \notag  \\
& + c\,  \left(  \dashint_{B_{R_0}}  \vartheta( |u_0|, 2 R_0^2 + 2 |u-u_0|^2  )\,\dd x \right)^{\frac{2(q_3-1)}{q_1q_3}}
\left( \dashint_{B_{R_0} } \left(1 + |\nabla u|^{q_2} \right) \, \dd x  \right)^{\frac{p}{q_2}}  + c  \,R_0^{2\beta} .\notag
\end{align}
From this point onwards we shall simplify the notation by removing the dependence of $\vartheta$ on $|u_0|$. Now, we use that $\vartheta$ is concave and apply Lemma \ref{PrelHInt} to the right hand side above.  Whereby, we conclude that
\begin{align}
&  \left( \dashint_{B_{\frac{R_0}{2}} } |V(\nabla u -z_0)|^{{q_1}} \,\dd x \right)^{\frac{2}{q_1}} \notag \\
\leq & \, c\,\dashint_{B_{R_0} }  |V( \nabla u - z_0) |^2  \, \dd x  \notag  \\
& + c\, \vartheta \left( 2 R_0^2 +  2\dashint_{B_{R_0}}   |u-u_0|^2  \,\dd x \right)^{\frac{2(q_3-1)}{q_1q_3}}
\left( \dashint_{B_{2R_0} } \left(1 + |\nabla u|^{p} \right) \, \dd x  \right)  + c  \,R_0^{2\beta} \\\notag
\leq & \, c\,\dashint_{B_{R_0} }  |V( \nabla u - z_0) |^2  \, \dd x  \notag  \\
& + c\, \vartheta \left( 2 R_0^2 +  2^{n+1}\dashint_{B_{2R_0}}   |u-u_0|^2  \,\dd x \right)^{\frac{2(q_3-1)}{q_1q_3}}
\left( \dashint_{B_{2R_0} } \left(1 + |\nabla u|^{p} \right) \, \dd x  \right)  + c  \,R_0^{2\beta}. \notag
\end{align}
Finally, we apply Poincar\'e inequality to the argument of $\vartheta$. It is at this point that we use that $u_0=(u)_{2R_0}$. This, together with the fact that $|z_0|\leq m$, lead to the following estimate, for a constant $c>0$:
\begin{align}
&  \left( \dashint_{B_{\frac{R_0}{2}} } |V(\nabla u -z_0)|^{{q_1}} \,\dd x \right)^{\frac{2}{q_1}} \notag \\
\leq & \, c\,\dashint_{B_{R_0} }  |V( \nabla u - z_0) |^2  \, \dd x  \notag  \\
& + c\, \vartheta \left(  2 R_0^2 + 2^{n+1}R_0^2\,\dashint_{B_{2R_0}}  |\nabla u |^2  \,\dd x \right)^{\frac{2(q_3-1)}{q_1q_3}}
\left( \dashint_{B_{2R_0} } \left(1 + |\nabla u  |^{p} \right) \, \dd x  \right)  + c  \,R_0^{2\beta} \notag\\
\leq & \, c\,\dashint_{B_{R_0} }  |V( \nabla u - z_0) |^2  \, \dd x  \notag  \\
& + c\, \vartheta \left(  c R_0^2 + c R_0^2\,\dashint_{B_{2R_0}}  |\nabla u - z_0|^2  \,\dd x \right)^{\frac{2(q_3-1)}{q_1q_3}}
\left( \dashint_{B_{2R_0} } \left(1 + |\nabla u -z_0 |^{p} \right) \, \dd x  \right)  + c  \,R_0^{2\beta}. \notag
\end{align}
The desired conclusion follows after rescaling the constant $c>0$, as well as using that $\vartheta\leq 2L$ and $|\cdot|^p\leq |V(\cdot)|^2$. Indeed, we obtain that
\begin{align}
 &\left( \dashint_{B_{\frac{R_0}{2}} } |V(\nabla u -z_0)|^{{q_1}} \,\dd x \right)^{\frac{2}{q_1}} \notag\\
  \leq & \, c\,\dashint_{B_{2 R_0} }  |V( \nabla u - z_0) |^2  \, \dd x   + c\, {\vartheta}_0 \left(  R_0^2 + R_0^2\,\dashint_{B_{2 R_0}}  |\nabla u - z_0|^2  \,\dd x \right)
  + c  \,R_0^{2\beta}. \notag
\end{align}
  For the last inequality we have  defined ${\vartheta}_0(t):=  \vartheta(ct)^{\kappa}$, with $\kappa=\frac{2(q_3-1)}{q_1q_3}\in (0,1)$. Note that $\kappa=\kappa(n,p,m,L)$, since $\varepsilon_0$ and $\varepsilon_1$ depend on those parameters. This completes the proof of the theorem. 
\end{proof}

\subsection{Linearization of the problem}\label{Sec:Linearization}
Let $(x_0,u_0)\in\Omega\times\R^N$ be fixed. We define the \textit{frozen} integrand $F^0\colon\R^{N\times n}\to\R$ by
\[
F^0(z):=F(x_0,u_0,z).
\]
Furthermore, we consider the second order Taylor polynomial of $F^0$ centred at $z_0\in\R^{N\times n}$, which is given by
\begin{equation}\label{eq:polynomial}
P(z):=F^0(z_0)+F^0_z(z_0)[z-z_0]+\frac{1}{2}F^0_{zz}(z_0)[z-z_0,z-z_0].
\end{equation}
We record here that,  for every $z,w\in\R^{N\times n}$, 
\begin{equation}\label{eq:TpolynomialP}
P(z):=P(w)+P'(w)[z-w]+\frac{1}{2}F^0_{zz}(z_0)[z-w,z-w].
\end{equation}
We then have the following well known approximation estimate:
\begin{lemma}\label{lemmaF-P}
Assume that $F$ satisfies the conditions \ref{H0}-\ref{H4:HoldCont} for some $p\geq 2$. Let $m>1$ be fixed and assume that $|u_0|+|z_0| \leq m$. There exists a modulus of continuity $\omega\colon[0,\infty)\to[0,1]$ such that $\omega$ is increasing, concave, $\omega(t)\geq 1$ for every $t\geq 1$, $\lim_{t\to 0}\omega(t)=0$, and with the property that, for a constant $c=c(m)>0$ and for every $z\in\R^{N\times n}$, 
\begin{equation}\label{eq:F0-Plemma}
|F^0(z)-P(z)|\leq c\,\omega(|z-z_0|^2)|V(z-z_0)|^2.
\end{equation}
\end{lemma}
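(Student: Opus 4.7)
The plan is to compare $F^0$ with its quadratic Taylor polynomial at $z_0$ via the integral remainder formula and then separate the cases of small and large $|z-z_0|$, since the estimate is of a mixed quadratic/$p$-growth type encoded by $V$.

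\textbf{Step 1 (Taylor remainder).} Writing (reading the polynomial \eqref{eq:polynomial} as the standard second-order expansion)
\begin{equation*}
F^0(z)-P(z)=\int_0^1(1-t)\bigl(F^0_{zz}(z_0+t(z-z_0))-F^0_{zz}(z_0)\bigr)[z-z_0,z-z_0]\,\dd t,
\end{equation*}
so that $|F^0(z)-P(z)|$ is controlled by the oscillation of $F^0_{zz}$ along the segment $[z_0,z]$ times $|z-z_0|^2$.

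\textbf{Step 2 (small regime $|z-z_0|\leq 1$).} All points $z_0+t(z-z_0)$ lie in $\overline{B(0,m+1)}$, and $|u_0|\leq m$. By the continuity of $F_{zz}$ on the bounded set $\Omega\times\overline{B(0,m)}\times \overline{B(0,m+1)}$, there exists a modulus of continuity $\omega_1$, depending only on $F$ and $m$, with $\omega_1(0+)=0$, such that
\begin{equation*}
|F^0_{zz}(z_0+t(z-z_0))-F^0_{zz}(z_0)|\leq \omega_1(|z-z_0|)\quad\text{for all }t\in[0,1].
\end{equation*}
Substituting into the Taylor formula and using that $|V(z-z_0)|^2\geq |z-z_0|^2$ (in the standard Acerbi--Fusco convention used throughout the paper) yields
\begin{equation*}
|F^0(z)-P(z)|\leq \tfrac12\omega_1(|z-z_0|)\,|z-z_0|^2\leq \omega_1(\sqrt{|z-z_0|^2})\,|V(z-z_0)|^2.
\end{equation*}

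\textbf{Step 3 (large regime $|z-z_0|\geq 1$).} Here I drop the Taylor structure and argue directly with the growth conditions. From \ref{H1:pg} one has $|F^0(z)|\leq L(1+|z|^p)$, while \eqref{LipFz} together with $|z_0|\leq m$ bound $|F^0(z_0)|$, $|F^0_z(z_0)|$ and $|F^0_{zz}(z_0)|$ by constants depending only on $L,m$. Using $|z|\leq (m+1)|z-z_0|$ on this regime, one obtains
\begin{equation*}
|F^0(z)-P(z)|\leq c(L,m)\bigl(1+|z|^p+|z-z_0|+|z-z_0|^2\bigr)\leq c(L,m)\,|z-z_0|^p\leq c(L,m)\,|V(z-z_0)|^2.
\end{equation*}
Since we shall ensure $\omega(t)\geq 1$ for $t\geq 1$, and here $|z-z_0|^2\geq 1$, this completes the bound in this regime.

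\textbf{Step 4 (construction of $\omega$).} It remains to assemble a single $\omega\colon[0,\infty)\to[0,1]$ (or bounded by a constant after a harmless rescaling of $c$) that is increasing, concave, satisfies $\omega(t)\to 0$ as $t\to 0$, and $\omega(t)\geq 1$ for $t\geq 1$. Take first $\bar\omega(t):=\min\{\omega_1(\sqrt t),1\}$ on $[0,1]$ and extend by $\bar\omega(t):=1$ for $t\geq 1$; this already dominates both estimates above. Replacing $\bar\omega$ by its least concave majorant on $[0,\infty)$ gives the required $\omega$ (concavity and monotonicity follow automatically, while $\omega(0+)=0$ persists since $\omega_1(0+)=0$). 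The main---still very mild---technical point of the proof is precisely this packaging of $\omega_1$ into a concave modulus that is bounded below by $1$ on $[1,\infty)$; the analytic content is entirely contained in Steps~2 and~3.
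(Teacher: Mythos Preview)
Your proof is correct and follows essentially the same approach as the paper's (which is only sketched there): Taylor's integral remainder for the small regime, a direct growth estimate for the large regime, and then passage to a concave majorant to build $\omega$. The only cosmetic differences are that the paper phrases the large-$|z-z_0|$ case by citing the Acerbi--Fusco truncation argument rather than writing it out, and builds $\omega$ as the concave envelope of $\max\{\omega_0(t),\min\{t,1\}\}$ with $\omega_0$ the explicit oscillation of $F^0_{zz}$; one minor slip in your Step~3 is that the bound on $|F^0_{zz}(z_0)|$ comes from continuity on the compact $\overline{B(0,m)}$ rather than from \eqref{LipFz}, but this is harmless.
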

This result is well known in regularity theory and we shall only make a brief comment regarding its proof.  
\begin{proof}
The construction of the function $\omega$ was already sketched in the proof of Lemma \ref{lemma:modL}. 

In order to obtain  \eqref{eq:F0-Plemma}, it is enough to use Taylor's approximation theorem with the integral form of the remainder for the case in which $|z-z_0|\leq m+1$. On the other hand, if $|z-z_0|>m+1>1$, the proof follows by the truncation technique of Acerbi \& Fusco in \cite{AF87}. 
\end{proof}

The following result is the key linearization strategy that will enable us to obtain partial regularity of $u$. We note that, unlike in the previous results, we assume here that $q\in[2,\infty]$. We need this assumption in order to use the $\LL^p$-estimates stated in Theorem \ref{Lpestimates}. We remark that, just as in \cite{KT}, the case $q\in[1,2)$ should be obtained by establishing first that $u$ satisfies a global minimality property in small subsets of $\Omega$. 
\begin{theorem}[Application of the linearization strategy] \label{theo:linearization}
Assume that $F$ satisfies the conditions \ref{H0}-\ref{H4:HoldCont} for some $p\geq 2$. Let $u\in\WW^{1,p}(\Omega,\R^N)\cap \WW^{1,q}_{\mathrm{loc}}(\Omega,\R^N)$ be a $\WW^{1,q}$-local minimizer of $\mathcal{F}$, where  $q\in[2,\infty]$.

If $q\in[2,\infty)$, then there exists $R_1\in (0,\tfrac{R_\delta}{4})$ such that, for every $\varepsilon\in(0,\frac{1}{4^n})$ and every $m>1$, there exists $\gamma\in (m,n,L,p)\in(0,1)$ such that,     if $\varrho\in(0,R_1)$, $B(x_0,\varrho)\subseteq\Omega$, $|u_{x_0,\varrho}|+|(\nabla u)_{x_0,\varrho}|\leq m$, and $E(x_0,\varrho)<\varepsilon$, then it holds that for every $\tau\in(0,\frac{1}{4})$, 
\begin{equation}\label{Theo:decayI}
E(x_0,\tau \varrho)\leq c\tau^{-n}\varrho^{2\gamma}+c(\tau^{-n}\omega_1(\varepsilon)+\tau^2)E(x_0,\varrho),  
\end{equation}
where $\omega_1(t):=\omega^{\varsigma}(ct)$ for some $\varsigma\in(0,1)$, $c>0$ is a constant,  and both $c$ and $\varsigma$ are depending on $m,n,L, F''$ and $p$. 

On the other hand, if $q=\infty$, we assume in addition that \ref{H0b:C2} and \ref{H1b:Fu} hold for $F$, and that  $u$ satisfies  \eqref{H5:SecVar}. Then, there exists $\delta_1>0$ such that, if \eqref{eq:q=infty} is satisfied, the decay stated in \eqref{Theo:decayI} will remain to be true. 
\end{theorem}

\begin{proof}
We let $R>0$ and assume that $0<4R<R_\delta$. Let $B(x_0,4R)\subseteq\Omega$. 
Furthermore, we denote
\[
u_0:= (u)_{x_0,4R}\,\,\,\mbox{ and }\,\,\,z_0:=(\nabla u)_{x_0,4R}
\]
in the polynomial $P$ defined in \eqref{eq:polynomial} and suppose that $|u_0|+|z_0|\leq m$ for a given $m>1$. Assume also that $E(x_0,4R)<\varepsilon$, with $\varepsilon\in(0,\tfrac{1}{4^n})$.\\
Let $h\in\WW^{1,p}_u(B_{R},\R^N)$ be a minimizer of the functional given by
\[
\mathpzc{P}(v):=\int_{B_R}\! P(\nabla v)\,\dd x.
\]
Then, by the strong quasiconvexity of $F^0$, 
\begin{align}
\dashint_{B_R}\! |V(\nabla u-\nabla h)|^2\,\dd x\leq &\, \dashint_{B_R}\!  \left( F^0(\nabla u-\nabla h+z_0) - F^0(z_0) \right)\,\dd x \notag\\
= &\, \dashint_{B_R}\!  \left( F^0(\nabla u-\nabla h +z_0) - P(\nabla u -\nabla h +z_0) \right)\,\dd x \label{eqDu-Dh} \\
& + \frac{1}{2}\, \dashint_{B_R}\! F^0_{zz} (z_0)[\nabla u -\nabla h,\nabla u -\nabla h]\,\dd x.\notag  
\end{align}
We first note that, since $u-h\in\WW^{1,p}_0(B_R,\R^N)$, and $h$ is a $P$-extremal, \eqref{eq:TpolynomialP} implies that
\begin{align}
& \frac{1}{2}\, \dashint_{B_R}\! F^0_{zz} (z_0)[\nabla u -\nabla h,\nabla u -\nabla h]\,\dd x \notag\\
= & \,\dashint_{B_R}\!  \left(P(\nabla u)-P(\nabla h) \right) \,\dd x \notag\\
= & \,\dashint_{B_R}\! \left( P(\nabla u) - F^0(\nabla u )  \right) \,\dd x + \dashint_{B_R}\! \left( F^0(\nabla u)-F(x,u,\nabla u) \right) \,\dd x \label{II+III+IV+V+VI}\\
& + \dashint_{B_R}\! \left( F(x,u,\nabla u) - F(x,h,\nabla h) \right) \,\dd x  + \dashint_{B_R}\! \left(  F(x,h,\nabla h) - F^0(\nabla h)  \right) \,\dd x \notag\\
& + \dashint_{B_R}\! \left( F^0 (\nabla h) - P(\nabla h) \right) \,\dd x .\notag
\end{align}
From \eqref{eqDu-Dh} and \eqref{II+III+IV+V+VI}, it follows that
\begin{align}
&  \,\dashint_{B_R}\! |V(\nabla u-\nabla h)|^2\,\dd x \notag\\
\leq  & \, \dashint_{B_R}\!  \left( F^0(\nabla u-\nabla h+z_0) - P(\nabla u - \nabla h + z_0) \right)\,\dd x \notag\\
& + \,\dashint_{B_R}\! \left( P(\nabla u) - F^0(\nabla u )  \right) \,\dd x  \label{I+II+III+IV+V+VI}\\
& + \dashint_{B_R}\! \left( F^0(\nabla u)-F(x,u,\nabla u) \right) \,\dd x + \dashint_{B_R}\! \left( F(x,u,\nabla u) - F(x,h,\nabla h) \right) \,\dd x  \notag \\
& + \dashint_{B_R}\! \left(  F(x,h,\nabla h) - F^0(\nabla h)  \right) \,\dd x  + \dashint_{B_R}\! \left( F^0 (\nabla h) - P(\nabla h) \right) \,\dd x \notag\\
= & \, \mathrm{I+II+III+IV+V+VI}. \notag
\end{align}
Observe here that if $q\in[2,\infty)$, inequality \eqref{eq:F''qc} and Theorem \ref{Lpestimates} imply that there exists a constant $K=K( m,n,q)>0$ such that
\[
\|\nabla u-\nabla h\|_{\LL^q(B_R,\R^{N\times n})}\leq \|\nabla u\|_{\LL^q(B_R,\R^{N\times n})}+ \|\nabla h\|_{\LL^q(B_R,\R^{N\times n})}\leq K  \|\nabla u\|_{\LL^q(B_R,\R^{N\times n})}.
\]
Since $u\in\WW^{1,q}_\mathrm{loc}(\Omega,\R^N)$, this implies that, for $R_1\in(0,\tfrac{R_\delta}{4})$ small enough we will have that for every $R\in(0,R_1)$, 
$$\|\nabla u-\nabla h\|_{\LL^q(B_R,\R^{N\times n})}<\delta
$$
 and hence, by the local minimality condition, 
\begin{equation}\label{IV}
\mathrm{IV}<0.
\end{equation}
On the other hand, if $q=\infty$ Theorem \ref{Lpestimates} implies that 
\[
[\nabla h-z_0]_{\mathrm{BMO}(B_R,\R^{N\times n})}\leq A_\infty\|\nabla u-z_0\|_{\LL^{\infty}(B_R,\R^{N\times n})}.
\]
Furthermore, since $u$ is a weak local minimizer, our assumptions enable us to apply Theorem \ref{theo:morethanWLMVMO} to conclude that $u$ is a $\WW^{1,\mathrm{BMO}}$-local minimizer, meaning that for some $\delta_*\in(0,1)$ we will have that, if $h-u\in\WW^{1,2p}_0(\Omega,\R^N)$ and $[\nabla h-\nabla u]_{\mathrm{BMO}(\Omega,\R^{N\times n})}<\delta_*$, then \eqref{IV} will also hold. 

Whereby, by choosing  $\delta_1>0$ such that $\delta_1<\min\{\tfrac{\delta_*}{A_\infty+1},\delta_0\}$, where $\delta_0>0$ is taken as in Lemma \ref{remark:varphi}, assumption \eqref{eq:q=infty} will imply that,  if $R_1>0$ is small enough and $R\in(0,R_1)$, then
\begin{align*}
[\nabla h-\nabla u]_{\mathrm{BMO}(B_R,\R^{N\times n})}\leq &  [\nabla h-z_0]_{\mathrm{BMO}(B_R,\R^{N\times n})}+[\nabla u-z_0]_{\mathrm{BMO}(B_R,\R^{N\times n})}\\
\leq &   A_\infty\|\nabla u - z_0\|_{\LL^\infty(B_R,\R^{N\times n})}+[\nabla u]_{\mathrm{BMO}(B_R,\R^{N\times n})}\\
<& A_\infty\delta_1+\delta_1<\delta_*.
\end{align*}
This means that, for our choice of $\delta_1$, the $\WW^{1,\mathrm{BMO}}$-local minimality can be applied and, whereby, we have \eqref{IV} for every $q\in[2,\infty]$.

To estimate $\mathrm{III}$ and $\mathrm{V}$ we will make use of \ref{H4:HoldCont}. We take $q_0\in (p,p+\varepsilon_0)$ as in Lemma \ref{PrelHInt} and apply H\"{o}lder's inequality to obtain the following:
\begin{align*}
\mathrm{III} \leq & \, c\, \dashint_{B_R}\! \vartheta\left( |x-x_0|^2 + |u-u_0|^2 \right) \left(1+|\nabla u|^p  \right) \,\dd x \notag\\
\leq & \,c \left(   \dashint_{B_R} \vartheta^{\frac{{q_0}}{q_0 -p}} \left(R^2 + |u-u_0|^2      \right) \,\dd x\right)^{\frac{ q_0 -p}{q_0 }} \left( \dashint_{B_R} \left(1 + |\nabla u|^p \right)^{\frac{q_0 }{p}} \,\dd x \right)^\frac{p}{q_0 }\notag\\
\leq & \,c\,\vartheta\left(R^2 + \dashint_{B_{4R}} |u-u_0|^2   \,\dd x  \right)^{\frac{q_0 -p}{q_0 }} \left( \dashint_{B_R} \left(1 + |\nabla u|^{q_0} \right) \,\dd x  \right)^\frac{p}{q_0}.
\end{align*}
The last inequality follows after increasing the size of the ball of integration and, then, using that $\vartheta\leq 2L$ is concave. 
\\
We now apply Poincar\'e inequality and the higher integrability from Lemma \ref{PrelHInt}, which for $q=\infty$ we can do because $\delta_1<\delta_0$. This allows us to  obtain that, for some constant $c=c(m,n)>0$ and for $\vartheta_1(t):=\vartheta(ct)^\frac{q_0 -p}{q_0 }$, 
\begin{align}
\mathrm{III} \leq & \,c\, \vartheta\left(R^2 + R^2\dashint_{B_{4R}} |\nabla u|^2   \,\dd x  \right)^{\frac{q_0-p}{{q_0}}}  \dashint_{B_{2R}} \left(1 + |\nabla u|^{p} \right) \,\dd x   \notag\\
\leq & \,c\, \vartheta_1\left(R^2 + R^{2}\dashint_{B_{4R}} |\nabla u - z_0|^2   \,\dd x  \right)  \dashint_{B_{4R}} \left(1 + |\nabla u-z_0|^{p} \right) \,\dd x.   \label{III}
\end{align}
We note that we have made $R=\frac{R_0}{2}$ while applying Lemma \ref{PrelHInt}, which is consistent with our assumption that $|u_0|+|z_0|\leq m$. 
\\
We move on to estimate $\mathrm{V}$. By applying assumption \ref{H4:HoldCont} and then H\"{o}lder's inequality, we have that
\begin{align*}
\mathrm{V} \leq & \, c\, \dashint_{B_R}\! \vartheta\left( |x-x_0|^2 + |h-u_0|^2 \right) \left(1+|\nabla h|^p  \right) \,\dd x \notag\\
 & + c\, \dashint_{B_R}\! \vartheta\left( R^2 + 2|h-u|^2 + 2|u-u_0|^2 \right) \left(1+|\nabla h|^p  \right) \,\dd x \notag\\
\leq & \,c \left(   \dashint_{B_R} \vartheta^{\frac{{q_0 }}{q_0 -p}} \left(R^2 + 2|h-u|^2 + 2|u-u_0|^2      \right) \,\dd x\right)^{\frac{q_0 -p}{q_0 }} \left( \dashint_{B_R} \left(1 + |\nabla h|^p \right)^{\frac{q_0 }{p}} \,\dd x \right)^\frac{p}{q_0 }\notag\\
\leq & \,c\,\vartheta\left(R^2 + \dashint_{B_R} \left(2|h-u|^2 + 2|u-u_0|^2  \right)  \,\dd x  \right)^{\frac{q_0 -p}{q_0 }} \left( \dashint_{B_R} \left(1 + |\nabla h|^{q_0 } \right) \,\dd x  \right)^\frac{p}{q_0 }\notag\\
\leq & \,c\,\vartheta_1\left(R^2 + \dashint_{B_R} |h-u|^2\,\dd x + \dashint_{B_{4R}}|u-u_0|^2    \,\dd x  \right) \left( \dashint_{B_R} \left(1 + |\nabla h|^{q_0 } \right) \,\dd x  \right)^\frac{p}{q_0 },\notag
\end{align*}
where the constant $c>0$ that appears in the definition of $\vartheta_1$, may have changed. 

As before, we now apply Poincar\'e inequality to the functions $h-u\in\WW^{1,p}_0(B_R,\R^N)$ and $u-u_0$, to estimate the first term on the right hand side. This, together with the $\LL^p$-estimates for $h$ from \eqref{Lpestimates}, leads to 
\begin{align*}
\mathrm{V} \leq & \,c\vartheta_1\left(R^2 + R^2\dashint_{B_R} |\nabla h-\nabla u|^2 \,\dd x+R^2\dashint_{B_{4R}}  |\nabla u|^2   \,\dd x  \right) \left( \dashint_{B_R} \left(1 + |\nabla u|^{q_0 } \right) \,\dd x  \right)^\frac{p}{q_0 } \notag\\
\leq &\, c\vartheta_1\left(R^2 + R^2\dashint_{B_{4R}} \left(|\nabla h-z_0|^2 + |\nabla u-z_0|^2  + |\nabla u|^2  \right)  \,\dd x  \right) \left( \dashint_{B_R} \left(1 + |\nabla u|^{q_0 } \right) \,\dd x  \right)^\frac{p}{q_0 }\hspace{-1.5mm}. \notag
\end{align*}
For the last inequality above, the argument of $\vartheta_1$ has been rescaled again without changing the notation. 
\\
By Theorem \ref{Lpestimates},  Lemma \ref{PrelHInt}, and the fact that $|z_0|\leq m$, we can further obtain that
\begin{align}
\mathrm{V} \leq & \,c\,\vartheta_1\left(R^2 + 2 R^2\dashint_{B_{4R}} |\nabla u- z_0 |^2    \,\dd x  \right) \dashint_{B_{2R}} \left(1 + |\nabla u-z_0|^{p} \right) \,\dd x \notag  \\
\leq & \,c\,\vartheta_1 \left(R^2 +  R^2\dashint_{B_{4R}}  |\nabla u-z_0|^2    \,\dd x  \right) \dashint_{B_{4R}} \left(1 + |\nabla u-z_0|^{p} \right) \,\dd x. \label{V} 
\end{align}
We now  estimate $\mathrm{II+VI}$. Indeed, applying Lemma \ref{lemmaF-P} and H\"{o}lder's inequality we obtain that, for $q_1\in (2,2+\varepsilon_1)$ as in Theorem \ref{theo:ReverseHolder}, the following holds:
\begin{align*}
\mathrm{II+VI} \leq & \,c\,\dashint_{B_R}\omega(|\nabla u - z_0|^2)|V(\nabla u - z_0)|^2\,\dd x + c\,\dashint_{B_R}\omega(\nabla h - z_0)|V(\nabla h - z_0)|^2\,\dd x \notag \\
\leq &\,c\,\left(\dashint_{B_R} \omega^{\frac{q_1}{q_1-2}} (|\nabla u -z_0|^2)\right)^{\frac{q_1-2}{q_1}}  \left( \dashint_{B_R} |V(\nabla u - z_0)|^{q_1} \right)^{\frac{2}{q_1}} \notag\\
& + \,c\,\left(\dashint_{B_R} \omega^{\frac{q_1}{q_1-2}} (|\nabla h -z_0|^2)\right)^{\frac{q_1-2}{q_1}}  \left( \dashint_{B_R} |V(\nabla h - z_0)|^{q_1} \right)^{\frac{2}{q_1}} \notag\\
\leq & \,c\,\omega\left(\dashint_{B_R} |\nabla u -z_0|^2\right)^{\frac{q_1-2}{q_1}}  \left( \dashint_{B_R} |V(\nabla u - z_0)|^{q_1} \right)^{\frac{2}{q_1}} \notag\\
& + \,c\,\omega \left(\dashint_{B_R} |\nabla h -z_0|^2 \right)^{\frac{q_1-2}{q_1}}  \left( \dashint_{B_R} |V(\nabla h - z_0)|^{q_1} \right)^{\frac{2}{q_1}}. \notag
\end{align*}
For the last inequality we have used, once again, that $\omega\leq 1$ is concave. Using this one more time, as well as the $\LL^p$-estimates from Theorem \ref{Lpestimates}, we derive from above that
\begin{align*}
\mathrm{II+VI} \leq &  \,c\,\omega\left(c\,\dashint_{B_R} |\nabla u -z_0|^2\right) ^{\frac{q_1-2}{q_1}} \left( \dashint_{B_R} |V(\nabla u - z_0)|^{q_1} \right)^{\frac{2}{q_1}} . \notag
\end{align*}
Therefore, by \eqref{eq:RevHold}, we obtain that for $\vartheta_1$ as it was previously defined, 
\begin{align}
 \mathrm{II+VI} \leq &  \,c\,\omega\left(c\,\dashint_{B_{4R}} |\nabla u -z_0|^2\right)^{\frac{q_1-2}{q_1}} \dashint_{B_{4 R} }  |V( \nabla u - z_0) |^2  \, \dd x  \notag\\
 & +\,c\,\omega\left(c\,\dashint_{B_{4R}} |\nabla u -z_0|^2\right)^{\frac{q_1-2}{q_1}}  {\vartheta}_1 \left(  R^2 + R^2\,\dashint_{B_{4R}}  |\nabla u - z_0|^2  \,\dd x \right) 
\label{II+VI}\\
 &  + c \,\omega\left(c\,\dashint_{B_{4R}} |\nabla u -z_0|^2\right)^{\frac{q_1-2}{q_1}} \,R^{2\beta}.\notag
\end{align}
Finally, we will now estimate term $\mathrm{I}$. In what follows, we first use Lemma \ref{lemmaF-P} and then we take $q_1\in (2,2+\varepsilon)$ as in the estimate of $\mathrm{II+IV}$. We use H\"{o}lder's inequality and, once again, exploit the fact that $\omega\leq 1 $ is concave to obtain that
\begin{align}
\mathrm{I} \leq & \, c\,\dashint_{B_R}\omega(|\nabla u - \nabla h|^2)|V(\nabla u - \nabla h)|^2\,\dd x\notag\\
\leq  & \,c\,\omega\left( \dashint_{B_R}|\nabla u - \nabla h|^2\,\dd x\right)^{\frac{q_1-2}{q_1}}  \left( \dashint_{B_R}|V(\nabla u - \nabla h)|^{q_1} \,\dd x \right)^{\frac{2}{q_1}} . \notag
\end{align}
Now, using Theorem \ref{Lpestimates}, the previous inequality leads to
\begin{align}
\mathrm{I} \leq & \,c\,\omega\left(c\, \dashint_{B_R}\left( |\nabla u - z_0|^2 + |\nabla h - z_0|^2 \right) \,\dd x\right)^{\frac{q_1-2}{q_1}}  \left( \dashint_{B_R}|V(\nabla u - z_0)|^{q_1} \,\dd x   \right)^{\frac{2}{q_1}} \notag \\
& +\,c\,\omega\left(c\, \dashint_{B_R}\left( |\nabla u - z_0|^2 + |\nabla h - z_0|^2 \right) \,\dd x\right) ^{\frac{q_1-2}{q_1}} \left( \dashint_{B_R}|V(\nabla h - z_0)|^{q_1} \,\dd x   \right)^{\frac{2}{q_1}} \notag\\
\leq & \,c\,\omega\left(c \,\dashint_{B_R}|\nabla u - z_0|^2\,\dd x\right) ^{\frac{q_1-2}{q_1}} \left( \dashint_{B_R}|V(\nabla u - z_0)|^{q_1} \,\dd x  \right)^{\frac{2}{q_1}}. \notag 
\end{align}
Finally, we let $\omega_1(t):=\omega(c\,t)^{\frac{q_1-2}{q_1}} $ and note that, by the higher integrability from Theorem \ref{theo:ReverseHolder}, this leads to the following estimate:
\begin{align}
\mathrm{I} \leq & \,c\,\omega_1\left(c \,\dashint_{B_{4R}}|\nabla u - z_0|^2\,\dd x\right)  \dashint_{B_{4R}}|V(\nabla u - z_0)|^{2} \,\dd x  \notag\\
& +  \,c\,\omega_1\left(c \,\dashint_{B_{4R}}|\nabla u - z_0|^2\,\dd x\right)  {\vartheta}_1 \left(  R^2 + R^2\,\dashint_{B_{4R}}  |\nabla u - z_0|^2  \,\dd x \right)  \label{I} \\
& + c \,\omega_1\left(c\,\dashint_{B_{4R}} |\nabla u -z_0|^2\right) \,R^{2\beta}.\notag
\end{align}
Since $\omega_1\leq 1$ and $\vartheta_1\leq c(m,n,p,L)$, it follows from the estimates \eqref{IV}-\eqref{I} that, if $R\in (0,R_1)$, $u_0=(u)_{4R}$ and  $z_0=(\nabla u)_{4R}$ satisfy $|u_0|+|z_0|\leq m$ and $E(x_0,4R)<\varepsilon<1$, then for a constant $c=c(m)>0$, 
\begin{align}
\mathrm{I+II+III+IV+V+VI}\leq & \,c\, \vartheta_1(2R^2)+cR^{2\beta}+\omega_1(\varepsilon)E(x_0,4R)\notag\\
\leq & \,c \,R^{2\beta\kappa}+\omega_1(\varepsilon)E(x_0,4R).\label{I-Vfinal}
\end{align}
The last inequality is making use of the definition of $\vartheta_1$ and the fact that $\kappa<1$. 
\\
For the last step of the proof we note that, by  Lemma \ref{lemma:qminofMean}, inequality \eqref{eq:F''qc}, and Theorem \ref{theo:DecayAHarmonic}, it follows that for every $r\in (0,\frac{R}{4})$, 
\begin{align}
\dashint_{B_r} |V(\nabla u - (\nabla u)_r)|^2\,\dd x \leq &\,c\, \dashint_{B_r} |V(\nabla u - (\nabla h)_r)|^2\,\dd x \notag\\
\leq & \,c\, \dashint_{B_r} |V(\nabla u - \nabla h )|^2\,\dd x + \,c\, \dashint_{B_r} |V(\nabla h - (\nabla h)_r)|^2\,\dd x \notag \\
\leq & \,c\left(\frac{R}{r} \right)^n \dashint_{B_R} |V(\nabla u - \nabla h )|^2\,\dd x \label{FinalEstLin} \\
& +  \,c\,\left(\frac{r}{R}\right)^2 \dashint_{B_R} |V(\nabla u - (\nabla u)_R)|^2\,\dd x.\notag
\end{align}
Combining the estimates \eqref{I+II+III+IV+V+VI}, \eqref{I-Vfinal} and \eqref{FinalEstLin}, and increasing again the size of the ball $B_R$ in \eqref{FinalEstLin}, we can conclude that 
\begin{align}
E(x_0,r) \leq & \,c\left(\frac{R}{r} \right)^n   R^{2\beta\kappa}+c\left( \left(\frac{R}{r} \right)^n   \omega_1(\varepsilon) + \left(\frac{r}{R}\right)^2 \right) E(x_0,4R) .\notag
\end{align}
By writing $\varrho=4R$, we can reinterpret this as the fact that, for every $\varepsilon\in(0,\frac{1}{4^n})$ and $m>1$, if $\varrho\in(0,4R_1)$, $B(x_0,\varrho)\subseteq\Omega$, $|(u)_{x_0,\varrho}|+|(\nabla u)_{x_0,\varrho}|<m$, and $E(x_0,\varrho)<\varepsilon$, then for every $\tau\in(0,\frac{1}{4})$, 
\begin{equation*}
E(x_0,\tau \varrho)\leq c\tau^{-n}\varrho^{2\beta\kappa}+c(\tau^{-n}\omega_1(\varepsilon)+\tau^2)E(x_0,\varrho). 
\end{equation*}
\end{proof}

\begin{proof}[Proof of Proposition \ref{Prop:FinalDecay}]
The estimate follows readily from \eqref{Theo:decayI}. Indeed, by taking the constant $c>0$ to be larger if necessary, we take $\gamma:=\beta\kappa$, with $\kappa\in(0,1)$ as in Theorem \ref{theo:linearization}, $\alpha\in(2\gamma,1)$, and $\tau_0  \in (0,\frac{1}{4})$ such that
\[
c\tau_0^2=\frac{1}{2}\tau_0^\alpha, 
\]
or, in other words, $\tau_0:=(2c)^{-\frac{1}{2-\alpha}}$. 

Furthermore, for such a $\tau_0=\tau_0(m,n,N,L)$, we impose a further smallness condition on $\varepsilon>0$, so that 
\begin{equation}\label{eq:smallnessepsilon}
c\omega_1(\varepsilon)\tau_0^{-n}\leq \frac{1}{2}\tau_0^\alpha. 
\end{equation}
This is possible because $\omega_1$ is increasing, continuous at $0$ and $\omega_1(0)=0$. Whereby, there exists $\varepsilon_1\in(0,\frac{1}{4^n})$ such that, for every $\varepsilon\in(0,\varepsilon_1)$, \eqref{eq:smallnessepsilon} holds. 

We now let $c_0:= c\tau_0^{-n}$ and conclude that 

\begin{equation*}
E(x_0,\tau_0\varrho) \leq c_0 \varrho^{2\gamma}+\tau_0^\alpha E(x_0,\varrho).
\end{equation*}
This completes the proof of the proposition.
\end{proof}
We conclude by establishing the proof of the regularity result. The proof follows a classical iteration process and it fully relies on Proposition \ref{Prop:FinalDecay}. However, since the exponent $\gamma$ depends on $m>1$ in the previous statement, we include below the main steps of the iteration, for the convenience of the reader. 

\begin{proof}[Proof of Theorem \ref{theo:regularity}]
We begin by fixing $m>0$ and noting that, if $\varepsilon_1>0$ taken as in Proposition \ref{Prop:FinalDecay} for the fixed number $m+1$, and if $0<\varepsilon<\varepsilon_1<1$, then  for any $x_0\in\Omega_0$ and any $\varrho\in\left(0,\frac{1}{2}\mathrm{dist}(x_0,\partial\Omega)\right)$ satisfying that
\[
|(u)_{x_0,\varrho}|+|(\nabla u)_{x_0,\varrho}|<m\,\,\,\mbox{ and }\,\,\,E(x_0,\varrho)<\varepsilon,
\]
there exists $\varrho_0\in (0,\varrho)$ such that, for every $x\in B(x_0,\varrho_0)$, it also holds that
\begin{equation*}
|(u)_{x,\varrho}|+|(\nabla u)_{x,\varrho}|<m\,\,\,\mbox{ and }\,\,\,E(x,\varrho)<\varepsilon.
\end{equation*}
This means that, for every $x\in B(x_0,\varrho)$ and for $0< \tau_0<\frac{1}{4}$ as in Proposition \ref{Prop:FinalDecay}, it holds that 
 \begin{equation*}
E(x_0,\tau_0\varrho) \leq c_0 \varrho^{2\gamma}+\tau_0^\alpha E(x_0,\varrho).
\end{equation*}
A standard iteration process (see, for example, \cite[Proposition 9.4]{Giusti}) leads to finding a constant $M=M(n,m,L,p)>0$ such that, for every $x\in B(x_0,\varrho)$, and for every $r\in \left(0,\frac{1}{4}\varrho\right)$, 
\begin{equation}
E(x,r)\leq M\left(\frac{r}{\varrho}\right)^{\gamma}, 
\end{equation}
where $\gamma$ is also as in Proposition \ref{Prop:FinalDecay}. Therefore, by Campanato-Meyers' classical characterization of H\"{o}lder continuity, it follows that $ u\in \CC^{1,\frac{\gamma}{2}}(B(x_0,\varrho_0),\R^N)$. 

In order to establish that, in fact, $ u\in \CC^{1,{\beta}}(\Omega_0,\R^N)$, with $\beta$ as in \ref{H4:HoldCont}, it is sufficient to go again through the proof of Theorem \ref{theo:linearization} while exploiting the fact that $\nabla u$ is already known to be continuous in $\Omega_0$, in the spirit of using Schauder estimates. 

Indeed, following the notation of Theorem \ref{theo:linearization} we can take $R>0$  so that $\overline{B(x_0,R)}\subseteq\Omega_0$, and $R>0$  small enough so that there exists a constant $M_1=M_1(m)>0$ such that 
\begin{equation}\label{eq:M1}
\| u\|_{\CC^{1,\frac{\gamma}{2}}(\overline{B(x_0,R)},\R^{N})}+\|h\|_{\CC^{1,\frac{\gamma}{2}}(\overline{B(x_0,R)},\R^{N})}\leq M_1. 
\end{equation}
Whereby, revisiting the estimates for each term on the right hand side of \eqref{I+II+III+IV+V+VI}, we note that there is no longer need to apply the higher integrability of $\nabla u$ and H\"{o}lder inequality to estimate $\mathrm{III}$ and $\mathrm{V}$. Instead, we can use \eqref{eq:M1} to conclude that, since $\vartheta$ is concave, then for a constant $c=c(m,M_1,n)>0$, 
\begin{equation}
\mathrm{III}+\mathrm{V}\leq c\,\vartheta\left(cR^2 \right)\leq cR^{2\beta}. 
\end{equation}
In a similar fashion we can obtain that 
\begin{equation}
\mathrm{I}+\mathrm{II}+\mathrm{VI}\leq \omega_1(E(x_0,4R)E(x_0,4R)+ cR^{2\beta},  
\end{equation}
where $c>0$ now also depends on the modulus of continuity $\omega$. 

Compiling and applying these estimates to the corresponding inequality \eqref{FinalEstLin}, we can conclude that, for every $\varepsilon\in(0,\frac{1}{4^n})$ and $m>1$, if $\varrho\in(0,4R_1)$, $B(x_0,\varrho)\subseteq\Omega$, $|(u)_{x_0,\varrho}|+|(\nabla u)_{x_0,\varrho}|<m$, and $E(x_0,\varrho)<\varepsilon$, then for every $\tau\in(0,\frac{1}{4})$, 
\begin{align*}
E(x_0,\tau \varrho)\leq & c\tau^{-n}\varrho^{2\beta}+c(\tau^{-n}\omega_1(\varepsilon)+\tau^2)E(x_0,\varrho).
\end{align*}
Whereby, we can argue again exactly as we did to obtain that $u\in\CC^{1,\frac{\gamma}{2}}(B(x_0,R),\R^N)$ to conclude now that, for some constant $C>0$, and for any $r\in (0,\tfrac{1}{4}\varrho)$, 
\[
E(x_0,r)\leq\ Cr^{2\beta},
\]
with the constant $C$ depending in particular on $\varrho$, on $\beta$ and, once again, on $m>1$. This completes the proof of Theorem \ref{theo:regularity}.  
\end{proof}

\section*{Funding}

This work was supported by Universidad Nacional Aut\'onoma de M\'exico - Direcci\'on General de Asuntos del Personal Acad\'emico - Programa de Apoyo a Proyectos de Investigación e Innovación Tecnológica [IA105221].

\section*{Acknowledgments}
The author wishes to thank Jan Kristensen for inspiring discussions. The author is also grateful to the anonymous referee for providing very valuable corrections and for such a careful review of the manuscript.

\noindent 
Departamento de Matem\'{a}ticas, Facultad de Ciencias, Universidad Nacional Aut\'{o}noma de M\'{e}xico,     Circuito exterior s/n, Ciudad Universitaria, C.P. 04510, Ciudad de M\'{e}xico, M\'{e}xico
\\
e-mail: \textit{judith@ciencias.unam.mx}

\end{document}